
\documentclass[letterpaper,10pt,conference]{ieeeconf} 

\IEEEoverridecommandlockouts 

\overrideIEEEmargins 




\usepackage{amsmath,amssymb,amsthm,mathtools}
\usepackage[dvipsnames]{xcolor}
\usepackage{graphicx}
\usepackage{booktabs}
\usepackage{array}
\usepackage[compress]{cite}
\usepackage[multiple]{footmisc}

\makeatletter
\newtheoremstyle{stylename}
  {\z@}
  {\z@}
  {}
  {\parindent}
  {\itshape}
  {:}
  {5\p@ plus\p@ minus\p@\relax}
  {\thmname{#1}\thmnumber{ #2}\thmnote{ #3}}

\newlength{\myparindent}
\setlength{\myparindent}{\parindent}
\makeatother
\theoremstyle{stylename}

\newtheorem{theorem}{Theorem}

\newtheorem{lemma}[theorem]{Lemma}
\newtheorem{proposition}[theorem]{Proposition}

\newtheorem{definition}{Definition}

\newtheorem{remark}{Remark}

\newcommand{\calG}{\mathcal{G}}

\newcommand{\calN}{\mathcal{N}}

\newcommand{\calQ}{\mathcal{Q}}

\newcommand{\calS}{\mathcal{S}}

\newcommand{\calU}{\mathcal{U}}
\newcommand{\calV}{\mathcal{V}}
\newcommand{\calW}{\mathcal{W}}
\newcommand{\calX}{\mathcal{X}}

\newcommand{\Ahat}{\hat{A}}

\newcommand{\xhat}{\hat{x}}

\newcommand{\epsilonhat}{\hat\epsilon}
\newcommand{\Sigmahat}{\hat\Sigma}

\newcommand{\sbar}{\bar{s}}

\newcommand{\epsilonbar}{\bar{\epsilon}}
\newcommand{\sigmabar}{\bar{\sigma}}
\newcommand{\mubar}{\bar{\mu}}

\newcommand{\Itilde}{\tilde{I}}

\newcommand{\Ptilde}{\tilde{P}}

\newcommand{\Sigmatilde}{\tilde{\Sigma}}

\renewcommand{\Re}{\mathbb{R}}

\newcommand{\Ne}{\mathbb{N}}

\newcommand{\nxn}{{n\times n}}

\newcommand{\diff}{\mathrm{d}}

\newcommand{\conv}{\mathrm{conv}}

\newcommand{\diag}{\mathrm{diag}}
\newcommand{\trace}{\mathrm{trace}}

\newcommand{\Ker}{\mathrm{Ker}}

\newcommand{\spann}{\mathrm{span}}

\newcommand{\Imag}{\mathrm{Im}}
\newcommand{\Prob}{\mathbb{P}}
\newcommand{\SSb}{\mathbb{S}}
\newcommand{\BBb}{\mathbb{B}}
\newcommand{\sign}{\mathrm{sign}}

\newcommand{\calXhat}{\hat\calX}

\newcommand{\smalltxt}{\mathrm{small}}
\newcommand{\largetxt}{\mathrm{large}}


\title{\LARGE\bfseries Data-driven invariant subspace identification for black-box switched linear systems}

\author{Guillaume O.\ Berger, Raphaël M.\ Jungers and Zheming Wang
\thanks{GB is a BAEF fellow.
He is with CUPLV lab, University of Colorado Boulder.
guillaume.berger@colorado.edu.
RJ is a FNRS honorary Research Associate.
This project has received funding from the European Research Council under the European Union's Horizon 2020 research and innovation program under grant agreement No.\ 864017 -- L2C.
RJ is also supported by the Walloon Region, the Innoviris Foundation, and the FNRS (Chist-Era Druid-net).
RJ and ZW are with ICTEAM institute, UCLouvain, Belgium.
\{raphael.jungers,zheming.wang\}@uclouvain.be.}}

\newif\ifextended
\extendedfalse
\extendedtrue

\begin{document}

\maketitle
\thispagestyle{empty}
\pagestyle{empty}

\begin{abstract}
We present an algorithmic framework for the identification of candidate invariant subspaces for switched linear systems.
Namely, the framework allows to compute an orthonormal basis in which the matrices of the system are close to block-triangular matrices, based on a finite set of observed one-step trajectories and with a priori confidence level.
The link between the existence of an invariant subspace and a common block-triangularization of the system matrices is well known.
Under some assumptions on the system, one can also infer the existence of an invariant subspace when the matrices are close to be block-triangular.
Our approach relies on quadratic Lyapunov analysis and recent tools in scenario optimization.
We present two applications of our results for problems of consensus and opinion dynamics; the first one allows to identify the disconnected components in a switching hidden network, while the second one identifies the stationary opinion vector of a switching gossip process with antagonistic interactions.
\end{abstract}

\section{Introduction}\label{sec-introduction}

Data-driven control systems have a rich and long history, encompassing system identification \cite{lauer2019hybrid}, controller synthesis \cite{hjalmarsson1998iterative,campi2002virtual,coulson2019dataenabled}, formal verification \cite{fan2017dryvr}, etc.
In recent years, we have seen a paradigm shift from \emph{learning models} (system identification) to \emph{learning solutions} (or certificates, like control barrier functions \cite{robey2021learning}, abstractions \cite{makdesi2021efficient}, etc.).
This has been enabled by the development of new techniques for learning, such as PAC (Probably Approximately Correct) Learning \cite{kearns1994anintroduction,shalevshwartz2014understanding} and scenario optimization \cite{margellos2014ontheroad,garatti2021therisk}, which allow to provide solutions that are satisfactory with a high confidence level.
This is particularly relevant for cyber-physical systems because system identification is in general very hard for these systems \cite{lauer2019hybrid}, and so are most control problems, even if the model of the system is known \cite{blondel1999complexity}.

In this paper, we study the problem of identifying invariant subspaces for black-box switched linear systems in a data-driven way.
Switched linear systems are systems described by a finite set of linear modes among which the system can switch over time.
They constitute a paradigmatic class of cyber-physical systems, and appear naturally in many engineering applications or as abstractions of more complex systems \cite{liberzon2003switching}.
Invariant subspaces are a central concept in linear system analysis; e.g., for safety verification, to perform dimensionality reduction in system analysis, or because they contain important information about the system (as in consensus \cite{blondel2005convergence} or Markov chains \cite{seneta1981nonnegative}).

Our approach for subspace identification draws on scenario optimization \cite{garatti2021therisk}.
In particular, the existence of a candidate invariant subspace is inferred from the existence of a degenerate Lyapunov function for the system.
More precisely, the zero level-set of the Lyapunov function provides an orthonormal basis in which the matrices of the system are close (with bounded distance) to be block-triangular.
Under some assumptions on the system, one can then infer the existence of an invariant subspace for the system; examples of such assumptions are discussed in the applications.
By restricting to \emph{quadratic} Lyapunov functions, the existence of such a function can be formulated as an SDP optimization problem.
Because the system is black-box, the function has to be computed using a finite amount of data, and thus it is not guaranteed that the obtained quadratic function is a valid Lyapunov function for the system.
However, by using results from scenario optimization, we can estimate with a priori-fixed confidence level, the probability of the set of one-step trajectories that are compatible with the obtained function.
Using this estimation, we can bound the distance of the system matrices (in the appropriate orthonormal basis) to the set of block-triangular matrices. 
Note that a similar approach has been used in \cite{kenanian2019data,berger2021chanceconstrained,wang2021datadriven,berger2022comments} for the data-driven computation of quadratic Lyapunov functions for stability analysis of switched linear systems.
However, to the best of our knowledge, this work is the first one addressing the problem of common triangularization and invariant subspace identification for switched linear systems, without passing through a system identification phase.

We apply our technique on two problems in opinion dynamics.
The first one is a problem of consensus over a switching hidden network \cite{jadbabaie2003coordination}: by estimating the dimension of the dominant invariant subspace of the system, we are able to infer, from a finite set of observed one-step trajectories, the number of disconnected components in the network.
The second application is a problem of opinion dynamics with antagonistic interactions \cite{altafini2013consensus}.
The attracting opinion vector, if it exists, corresponds to a $1$-dimensional invariant subspace of the system.
Our technique allows us to identify, in a data-driven way, with high confidence level, such a subspace.


\emph{Outline.}
The paper is organized as follows.
In Section~\ref{sec-problem-setting}, we introduce the problem of interest.
Several intermediate results follow, leading eventually to the main result of the paper in Subsection~\ref{ssec-main-result}, which allows to identify an orthonormal basis in which the system matrices are close to be block-triangular.
Finally, in Section~\ref{sec-application-consensus-opinion-dynamics}, we present two applications of our framework for problems of consensus and opinion dynamics.

\emph{Notation.}
For vectors, $\lVert\cdot\rVert$ denotes the Euclidean norm, and for matrices, it denotes the spectral norm (largest singular value).
$\lVert\cdot\rVert_F$ denotes the Frobenius norm.
For a set $\calX\subseteq\Re^d$, $\calX^\perp$ denotes its orthogonal complement and $\conv(\calX)$ its convex hull.
$\SSb^{d-1}$ is the unit Euclidean sphere in $\Re^d$.

\section{Problem setting and main result}\label{sec-problem-setting}

\subsection{Switched linear systems and invariant subspaces}

We consider a discrete-time switched linear system, described by
\begin{equation}\label{eq-switched-system}
\xi(t+1) = A_{\sigma(t)} \xi(t), \quad \xi(t)\in\Re^n, \quad t\in\Ne,
\end{equation}
where $\sigma:\Ne\to\calQ\coloneqq\{1,\ldots,Q\}$ and for each $q\in\calQ$, $A_q\in\Re^\nxn$.
The function $\sigma$ is called the \emph{switching signal}\footnote{In our framework (worst-case scenario analysis), the switching signal is an external input on which the user has no control, and the objective is to deduce properties of the systems that will be valid for every switching signal.} of the system and specifies which \emph{mode} (i.e., which transition matrix $A_q$) is used by the system at each time $t\in\Ne$.

An invariant subspace for System \eqref{eq-switched-system} is a (non-trivial) linear subspace $\calU\subseteq\Re^n$ satisfying that for every trajectory $\xi$ of \eqref{eq-switched-system} with $\xi(0)\in\calU$, it holds that for all $t\in\Ne$, $\xi(t)\in\calU$.
The existence of an invariant subspace is related to the common block-triangularization of the system matrices, as explained in the proposition below.

\begin{proposition}[{\cite[p.\ 12]{jungers2009thejoint}}]\label{pro-triangularized}
Let $\calU\subseteq\Re^n$ be a non-trivial linear subspace with dimension $r$, and let $U\in\Re^\nxn$ be an orthogonal matrix whose first $r$ columns are a basis of $\calU$.
The following are equivalent:
\begin{enumerate}
\item $\calU$ is invariant for System \eqref{eq-switched-system};
\item For each $q\in\calQ$, there is $A_q^{(11)}\in\Re^{r\times r}$, $A_q^{(12)}\in\Re^{r\times(n-r)}$ and $A_q^{(22)}\in\Re^{(n-r)\times(n-r)}$ such that
\begin{equation}\label{eq-triangularizable}
U^\top A_qU = \left[\begin{array}{cc} A_q^{(11)} & A_q^{(12)} \\ 0 & A_q^{(22)} \end{array}\right].
\end{equation}\vskip0pt
\end{enumerate}
\end{proposition}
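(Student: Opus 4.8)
The plan is to replace the trajectory-based notion of invariance by a purely algebraic, mode-wise condition, and then to translate that condition into the vanishing of the lower-left block of $U^\top A_q U$, thereby chaining together the two equivalences. The whole argument is elementary linear algebra; the only nontrivial bookkeeping concerns the passage between the "every trajectory" formulation and the "every mode" formulation.

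First I would establish that $\calU$ is invariant for System \eqref{eq-switched-system} if and only if $A_q\calU\subseteq\calU$ for every $q\in\calQ$. The backward implication is a one-line induction along any trajectory: if $\xi(t)\in\calU$ and each mode maps $\calU$ into itself, then $\xi(t+1)=A_{\sigma(t)}\xi(t)\in\calU$, so starting in $\calU$ keeps the trajectory in $\calU$ for all $t\in\Ne$. For the forward implication I would exploit the arbitrariness of the switching signal: fixing any $q\in\calQ$ and any $x\in\calU$, consider the trajectory with $\xi(0)=x$ and $\sigma(0)=q$; invariance forces $\xi(1)=A_qx\in\calU$, and since $x$ ranges over all of $\calU$ this yields $A_q\calU\subseteq\calU$.

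Next, writing $U=[\,U_1\mid U_2\,]$ with $U_1\in\Re^{n\times r}$ the columns spanning $\calU$ and $U_2\in\Re^{n\times(n-r)}$ spanning $\calU^\perp$, I would note that $\calU=\Imag(U_1)$ and $\calU^\perp=\Imag(U_2)$, so $A_q\calU\subseteq\calU$ holds exactly when every column of $A_qU_1$ is orthogonal to $\calU^\perp$, i.e. annihilated by $U_2^\top$; that is, $U_2^\top A_qU_1=0$. Finally, orthogonality of $U$ gives the block expansion
\[
U^\top A_qU=\begin{bmatrix} U_1^\top A_qU_1 & U_1^\top A_qU_2 \\ U_2^\top A_qU_1 & U_2^\top A_qU_2 \end{bmatrix},
\]
whose lower-left block is precisely $U_2^\top A_qU_1$. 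Setting $A_q^{(11)}=U_1^\top A_qU_1$, $A_q^{(12)}=U_1^\top A_qU_2$ and $A_q^{(22)}=U_2^\top A_qU_2$, condition (2) holds if and only if this lower-left block vanishes, which closes the chain of equivalences $\text{(1)}\Leftrightarrow A_q\calU\subseteq\calU\;\forall q\Leftrightarrow\text{(2)}$.

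The main point requiring care is the forward direction of the first step, where one must use the freedom in choosing both the switching signal and the initial condition within $\calU$ to extract the mode-wise inclusion $A_q\calU\subseteq\calU$ from the a priori weaker statement about entire trajectories. Once that reduction is in place, the remaining steps are a direct identification of blocks under the change of basis $U$, and no further obstacle arises.
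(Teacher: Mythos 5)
Your proof is correct and complete: the reduction of trajectory-wise invariance to the mode-wise condition $A_q\calU\subseteq\calU$ (exploiting the freedom to choose $\sigma(0)=q$ and the initial state in $\calU$), followed by the identification of the lower-left block of $U^\top A_qU$ with $U_2^\top A_qU_1$ and the observation that $w\in\calU$ iff $U_2^\top w=0$, is exactly the standard argument. Note that the paper itself offers no proof of this proposition---it is quoted from \cite[p.~12]{jungers2009thejoint}---and your elementary argument is essentially the one given in that reference, so there is nothing to object to.
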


Finding an invariant subspace from a finite set of trajectories is generally impossible if none of the trajectories starts inside the subspace.
Therefore, we focus on finding a common \emph{``near''} block-triangularization, that is, an orthogonal change of basis $U$ in which the norm of the lower-left blocks --- the would-be $A_q^{(21)}$ blocks in \eqref{eq-triangularizable} --- is bounded.

In our framework, the available data is a set of $N$ one-step trajectories $(x_i,y_i)$, where $y_i=A_{q_i}x_i$ for some \emph{unobserved} mode $q_i$; i.e., we observe $N$ starting points $x_i$ and their respective images $y_i$ by some latent mode $q_i$.

\subsection{Quadratic Lyapunov approach}

A common block-triangularization of System \eqref{eq-switched-system} can be obtained from a positive semi-definite (PSD) matrix $P$ satisfying that for all $q\in\calQ$, $A_q^\top PA_q^{}\preceq\gamma^2P$ for some $\gamma>0$.
Indeed, in that case, the kernel $\calU$ of $P$ gives an invariant subspace.%
\footnote{This is easily seen, as for any $x\in\Ker(P)$, $(A_qx)^\top P(A_qx)\leq\gamma^2x^\top Px=0$, so that $PA_qx=0$ and thus $A_qx\in\Ker(P)$.}
Note that $\gamma$ can be seen as a Lagrange multiplier arising in the $\calS$-procedure \cite{boyd1994linear}.

When the matrices $A_q$ are not available, we consider the data-driven version of the above approach.
Namely, for a set $\Omega=\{(x_i,y_i)\}_{i=1}^N$ of observations and $\gamma>0$, we consider the following optimization problem:
\begin{equation}\label{eq-optim-black}
\begin{array}{r@{}l}
\min\limits_P&\quad \lVert P\rVert_F^2 \\
\text{s.t.}&\quad y_i\!^\top Py_i \leq \gamma^2\,x_i\!^\top Px_i, \quad \forall\,i\in\{1,\ldots,N\}, \\
&\quad P\succeq0,\: \trace(P)=1.
\end{array}
\end{equation}
We let $P^\star_{\Omega,\gamma}$ be the optimal solution of \eqref{eq-optim-black}, if it exists.
In the following, we will assume that $\gamma>0$ is fixed, and thus we will omit it in the notation.

The rationale of minimizing $\lVert P\rVert_F^2$ (which is equal to the sum of the squared eigenvalues of $P$) is to reduce the gap between the largest and smallest nonzero eigenvalues of $P^\star_\Omega$.
In particular, if one defines the skewness of a PSD matrix $P$ as in Definition~\ref{def-skewness} below, then minimizing $\lVert P\rVert_F$ induce its skewness to be small, which will be desirable in Theorems~\ref{thm-probability-homogeneity} and~\ref{thm-main-result}.

\begin{definition}\label{def-skewness}
Let $P\in\Re^\nxn$ be PSD with eigenvalues $\lambda_1\geq\ldots\geq\lambda_{n-r}>0=\lambda_{n-r+1}=\ldots=\lambda_n$.
We define the \emph{skewness} of $P$ by $\chi(P)=\lambda_{n-r}^{-1}\prod_{j=1}^{n-r}\lambda_j^{1/n}$.
\end{definition}

We now establish the link between a solution of \eqref{eq-optim-black} and the existence of an orthogonal change of basis allowing to near block-triangularize the matrices of System \eqref{eq-switched-system}.
Therefore, define the set
\begin{align}
&\Psi_\Omega= \{(x,q)\in\Re^n\times\calQ : \nonumber \\
&\hspace{2.6cm} (A_qx)^\top P^\star_\Omega(A_qx) \leq \gamma^2 x^\top P^\star_\Omega x\} \label{eq-omegabar}
\end{align}
of point--mode pairs compatible with $P^\star_\Omega$.

It holds that if $\Psi_\Omega$ is sufficiently covering the set $\Re^n\times\calQ$, then one can bound the norm of the lower-left blocks in a decomposition of System \eqref{eq-switched-system} akin to \eqref{eq-triangularizable}, where $U$ is given by the kernel of $P^\star_\Omega$.
This is stated in Theorem~\ref{thm-near-triangularizable} below, but first we formalize the notion of ``sufficiently covering'' with the following concept of homogeneity (see also Figure~\ref{fig-homogeneous} for an illustration).

\begin{definition}\label{def-homogeneous}
Let $\calX\subseteq\Re^n$, $\epsilon>0$ and $P_1,P_2\in\Re^\nxn$ be PSD.
We say that $\calX$ is \emph{$(\epsilon,P_1,P_2)$-homogeneous} if for every $x\in\Re^n$, there is $\calV\subseteq\Re^n$ such that $0\in\conv(\calV)$ and for every $v\in\calV$, $v\!^\top P_1x=0$, $v\!^\top P_1v\leq\epsilon^2x^\top P_2x$ and $x+v\in\calX$.
\end{definition}

\begin{figure}
\centering
\includegraphics[width=0.5\linewidth]{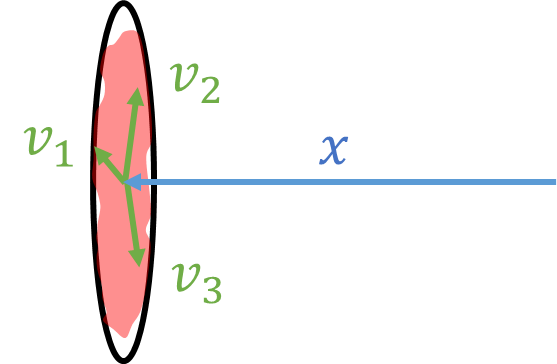}
\caption{$(\epsilon,P_1,P_2)$-homogeneity.
The red region is the intersection of $\calX$ with $x+\{v\in\Re^n: v^\top P_1x=0,\,v\!^\top P_1v\leq\epsilon^2x^\top P_2x\}$.
$\calX$ is $(\epsilon,P_1,P_2)$-homogeneous if for every $x$, there are vectors (here, e.g., $v_1,v_2,v_3$) whose convex hull contains $x$.}
\label{fig-homogeneous}
\end{figure}

\begin{theorem}\label{thm-near-triangularizable}
Consider System \eqref{eq-switched-system}.
Let $\Omega\subseteq\Re^n\times\Re^n$ and $\epsilon\in(0,1)$.
Let $P^\star_\Omega$ be the optimal solution of \eqref{eq-optim-black} and let $\Psi_\Omega$ be as in \eqref{eq-omegabar}.
Let $P^\star_\Omega=U\Sigma^2U^\top$ with $U\in\Re^\nxn$ orthogonal, $\Sigma=\diag(0_1,\ldots,0_r,\lambda_1,\ldots,\lambda_{n-r})$ and $\lambda_1,\ldots,\lambda_{n-r}>0$.
Let $\Sigmatilde=\diag(1_1,\ldots,1_r,\lambda_1,\ldots,\lambda_{n-r})$ and $\Ptilde=U\Sigmatilde^2U^\top$.
For each $q\in\calQ$, let $\calX_q=\{x:(x,q)\in\Psi_\Omega\}$ and assume that $\calX_q$ is $(\epsilon,P^\star_\Omega,\Ptilde)$-homogeneous.
Then, for each $q\in\calQ$, there is $A_q^{(11)}\in\Re^{r\times r}$, $A_q^{(12)}\in\Re^{r\times(n-r)}$, $A_q^{(21)}\in\Re^{(n-r)\times r}$ and $A_q^{(22)}\in\Re^{(n-r)\times(n-r)}$ such that
\begin{equation}\label{eq-near-triangularizable}
U^\top A_qU = \left[\begin{array}{cc} A_q^{(11)} & A_q^{(12)} \\ A_q^{(21)} & A_q^{(22)} \end{array}\right],
\end{equation}
and $\lVert\Lambda A_q^{(21)}\rVert\leq\epsilon\gamma$ and $\lVert\Lambda A_q^{(22)}\Lambda^{-1}\rVert\leq\sqrt{1+\epsilon^2}\gamma$, where $\Lambda=\diag(\lambda_1,\ldots,\lambda_{n-r})$.
\end{theorem}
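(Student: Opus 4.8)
The plan is to pass to the orthonormal basis $U$ and reparametrize so that the constraint defining $\Psi_\Omega$ and the three homogeneity conditions all become statements about the two matrices whose norms we must bound. Writing $U^\top A_qU$ in the block form of \eqref{eq-near-triangularizable}, set $M_q=\Lambda A_q^{(21)}$ and $N_q=\Lambda A_q^{(22)}\Lambda^{-1}$. For $x\in\Re^n$ write $U^\top x=(u,s)$ with $u\in\Re^r$, $s\in\Re^{n-r}$, and put $w=\Lambda s$. Since $P^\star_\Omega=U\diag(0_r,\Lambda^2)U^\top$, one has $x^\top P^\star_\Omega x=\lVert w\rVert^2$ and $x^\top\Ptilde x=\lVert u\rVert^2+\lVert w\rVert^2$, while a direct computation of $U^\top A_qU\,U^\top x$ gives $(A_qx)^\top P^\star_\Omega(A_qx)=\lVert M_qu+N_qw\rVert^2$. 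Hence $x\in\calX_q$ is exactly the inequality $\lVert M_qu+N_qw\rVert\leq\gamma\lVert w\rVert$.

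Next I would translate the homogeneity hypothesis into these coordinates. Fix $x$, equivalently a pair $(u,w)$, and let $\calV$ be the set furnished by Definition~\ref{def-homogeneous}; for $v\in\calV$ write $U^\top v=(u_v,s_v)$ and $w_v=\Lambda s_v$. Then $v^\top P^\star_\Omega x=0$ becomes $w_v^\top w=0$; the bound $v^\top P^\star_\Omega v\leq\epsilon^2 x^\top\Ptilde x$ becomes $\lVert w_v\rVert^2\leq\epsilon^2(\lVert u\rVert^2+\lVert w\rVert^2)$; and $x+v\in\calX_q$ becomes $\lVert M_q(u+u_v)+N_q(w+w_v)\rVert\leq\gamma\lVert w+w_v\rVert$. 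Because $w_v\perp w$, the right-hand side equals $\gamma\sqrt{\lVert w\rVert^2+\lVert w_v\rVert^2}\leq\gamma\sqrt{(1+\epsilon^2)\lVert w\rVert^2+\epsilon^2\lVert u\rVert^2}$.

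The key step is to remove the perturbation using $0\in\conv(\calV)$. By Carathéodory, write $0=\sum_k\alpha_kv_k$ as a finite convex combination with $v_k\in\calV$, so $\sum_k\alpha_ku_{v_k}=0$ and $\sum_k\alpha_kw_{v_k}=0$. Setting $z=M_qu+N_qw$ and $z_k=M_qu_{v_k}+N_qw_{v_k}$, linearity gives $\sum_k\alpha_kz_k=0$, hence $z=\sum_k\alpha_k(z+z_k)$. Each $z+z_k=M_q(u+u_{v_k})+N_q(w+w_{v_k})$ is controlled by the displayed bound, so the triangle inequality yields $\lVert z\rVert\leq\sum_k\alpha_k\lVert z+z_k\rVert\leq\gamma\sqrt{(1+\epsilon^2)\lVert w\rVert^2+\epsilon^2\lVert u\rVert^2}$. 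Since $U$ and $\Lambda$ are invertible, $(u,w)$ ranges over all of $\Re^r\times\Re^{n-r}$ as $x$ ranges over $\Re^n$, so this inequality holds for every $(u,w)$. Specializing $w=0$ gives $\lVert M_qu\rVert\leq\epsilon\gamma\lVert u\rVert$, i.e. $\lVert\Lambda A_q^{(21)}\rVert\leq\epsilon\gamma$; specializing $u=0$ gives $\lVert N_qw\rVert\leq\sqrt{1+\epsilon^2}\gamma\lVert w\rVert$, i.e. $\lVert\Lambda A_q^{(22)}\Lambda^{-1}\rVert\leq\sqrt{1+\epsilon^2}\gamma$.

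The main obstacle is precisely this cancellation step. The constraint is available only on $\calX_q$, not on all of $\Re^n$, so one cannot directly set $w=0$ in the raw inequality $\lVert M_qu+N_qw\rVert\leq\gamma\lVert w\rVert$ (doing so would spuriously force $M_q=0$). Homogeneity supplies, around each $x$, a cloud of admissible perturbations whose barycenter is $x$ itself, and averaging the perturbed constraints over this cloud is exactly what upgrades the conditional bound on $\calX_q$ to an unconditional quadratic bound valid everywhere. The one place demanding care is the bookkeeping of the cross term: the orthogonality $w_v\perp w$ must be used to ensure the right-hand side $\gamma\lVert w+w_v\rVert$ picks up no linear-in-$w_v$ contribution, which is what keeps the final constant at $\sqrt{1+\epsilon^2}$ rather than something larger.
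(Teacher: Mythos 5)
Your proof is correct and follows essentially the same route as the paper's: both average the constraint over the homogeneity cloud $\calV$ (using $0\in\conv(\calV)$ together with convexity of the norm/quadratic form --- your triangle inequality plays the role of the paper's Jensen step), and both use the orthogonality $v^\top P^\star_\Omega x=0$ to kill the cross term and keep the constant at $\sqrt{1+\epsilon^2}$. The only cosmetic difference is that you derive one inequality valid for all $(u,w)$ and then specialize to $w=0$ and $u=0$, whereas the paper runs the argument twice, directly at $x\in\Ker(P^\star_\Omega)$ and $x\in\Imag(P^\star_\Omega)$, which are exactly those two specializations.
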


\begin{proof}
\ifextended%
See Appendix~\ref{ssec-app-proof-thm-near-trianularizable}.
\else%
See the extended version.
\fi
\end{proof}

\subsection{Scenario approach and probabilistic guarantees}\label{ssec-scenario-approach-probabilistic-guarantees}

Theorem~\ref{thm-near-triangularizable} states that if the set $\Psi_\Omega$ satisfies some homogeneity assumption, then we can obtain a common near block-triangularization of System \eqref{eq-switched-system} with a bound on the norm of the lower-left blocks.
The difficulty is that it is in general impossible to compute the set $\Psi_\Omega$ from $\Omega=\{(x_i,y_i)\}_{i=1}^N$ if we do not have access to the matrices of the system.
Nevertheless, if the point--mode pairs $(x_i,q_i)$ generating $\Omega$ are sampled independently at random, then we can obtain \emph{probabilistic guarantees} on the probability of $\Psi_\Omega$.
This is presented in the following theorem, which is the first main result of this subsection.

\begin{theorem}\label{thm-chance-constrained-probabilistic-bound-optim-black}
Consider System \eqref{eq-switched-system} and let $\Prob$ be a probability measure on $\Re^n\times\calQ$.
Let $\beta\in(0,1)$ be a confidence level and $N\in\Ne$ a number of samples.
Let $\Omega=\{(x_i,y_i)\}_{i=1}^N$, where $y_i=A_{q_i}x_i$ and $\{(x_i,q_i)\}_{i=1}^N$ is sampled i.i.d.\ according to $\Prob$.
Let $P^\star_\Omega$ be the optimal solution of \eqref{eq-optim-black} and $\Psi_\Omega$ be as in \eqref{eq-omegabar}.
Then, with probability $1-\beta$ on the sampling of $\{(x_i,q_i)\}_{i=1}^N$, it holds that $\Prob(\Psi_\Omega)\geq1-\epsilonbar(\frac{n(n+1)}2)$, where $\epsilonbar:\{0,\ldots,N\}\to[0,1]$ is defined by $\epsilonbar(N)=1$ and for all $k\in\{0,\ldots,N-1\}$, $\epsilonbar(k)$ is the solution of the equation
\begin{equation}\label{eq-beta-N-eps}
\binom{N}{k}(1-\epsilonbar(k))^{N-k} - \frac\beta{N}\sum_{i=k}^{N-1}\binom{i}{k}(1-\epsilonbar(k))^{i-k}=0.
\end{equation}
\end{theorem}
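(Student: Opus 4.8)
The plan is to recognize \eqref{eq-optim-black} as a convex scenario program in the decision variable $P$ and to invoke the ``wait-and-judge'' scenario theorem of \cite{garatti2021therisk}, whose probabilistic certificate is exactly the function $\epsilonbar$ defined through \eqref{eq-beta-N-eps}. First I would fix the correspondence: the decision variable is the symmetric matrix $P$, living in a space of dimension $\frac{n(n+1)}{2}$; the objective $\lVert P\rVert_F^2$ is strictly convex; the $i$-th sampled constraint $y_i^\top Py_i\leq\gamma^2 x_i^\top Px_i$ is linear in $P$; and the two remaining constraints $P\succeq0$ and $\trace(P)=1$ are convex and independent of the samples. The uncertain parameter is the point--mode pair $(x,q)\in\Re^n\times\calQ$ drawn from $\Prob$, with $y=A_qx$. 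The crucial observation is that the \emph{violation set} of $P^\star_\Omega$ --- the set of fresh pairs $(x,q)$ for which the associated constraint $(A_qx)^\top P^\star_\Omega(A_qx)\leq\gamma^2 x^\top P^\star_\Omega x$ fails --- is precisely the complement of $\Psi_\Omega$ in \eqref{eq-omegabar}. Hence the violation probability $V(P^\star_\Omega)$ equals $1-\Prob(\Psi_\Omega)$, so the claimed bound $\Prob(\Psi_\Omega)\geq1-\epsilonbar(\frac{n(n+1)}{2})$ is equivalent to $V(P^\star_\Omega)\leq\epsilonbar(\frac{n(n+1)}{2})$.

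Next I would check the hypotheses of the scenario theorem. Convexity is immediate from the above. Strict convexity of $\lVert P\rVert_F^2$ forces the minimizer to be unique whenever the feasible set is nonempty, so the existence-and-uniqueness requirement reduces to feasibility, which is granted by the standing hypothesis that $P^\star_\Omega$ exists. Under these conditions the wait-and-judge theorem asserts that, for $\epsilonbar$ as in \eqref{eq-beta-N-eps},
\[
\Prob^N\{\,V(P^\star_\Omega)>\epsilonbar(s^\star)\,\}\leq\beta,
\]
where $s^\star$ is the number of \emph{support constraints} of the sampled program, i.e.\ the number of sampled constraints whose removal would change $P^\star_\Omega$.

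The final step bounds $s^\star$ a priori. By the classical bound on support constraints, their number never exceeds the dimension of the decision variable; since $P$ is a symmetric $n\times n$ matrix, $s^\star\leq\frac{n(n+1)}{2}$ (the equality $\trace(P)=1$ only makes this more conservative). Because $\epsilonbar(k)$ is non-decreasing in $k$ --- a structural property of the solution of \eqref{eq-beta-N-eps} --- we obtain $\epsilonbar(s^\star)\leq\epsilonbar(\frac{n(n+1)}{2})$, so the event $\{V(P^\star_\Omega)>\epsilonbar(\frac{n(n+1)}{2})\}$ is contained in $\{V(P^\star_\Omega)>\epsilonbar(s^\star)\}$ and therefore has probability at most $\beta$. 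Rewriting $V(P^\star_\Omega)=1-\Prob(\Psi_\Omega)$ gives the theorem.

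The main obstacle I anticipate is not the counting argument but the clean verification that our SDP fits the exact regularity framework the scenario theorem demands. One must confirm uniqueness of $P^\star_\Omega$ (delivered by the strictly convex Frobenius objective, modulo the feasibility caveat implicit in ``let $P^\star_\Omega$ be the optimal solution''), put the constraints into the precise convex scenario form, and argue that the always-present structural constraints $P\succeq0$ and $\trace(P)=1$ do not inflate the support-constraint count beyond $\frac{n(n+1)}{2}$. A secondary point worth stating carefully is the monotonicity of $\epsilonbar$, since it is exactly what licenses replacing the data-dependent $s^\star$ by the worst-case dimension $\frac{n(n+1)}{2}$.
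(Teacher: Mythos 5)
Your proposal is correct and follows essentially the same route as the paper's proof: recognize \eqref{eq-optim-black} as a convex scenario program whose violation set is exactly the complement of $\Psi_\Omega$, invoke the risk--complexity theorem of \cite{garatti2021therisk} (restated as Theorem~\ref{thm-chance-constrained-probabilistic-bound} in the appendix), and replace the data-dependent complexity by the deterministic bound $\frac{n(n+1)}{2}$, exactly as the paper does via \cite[Lemma~2.10]{calafiore2010random} (the paper counts $s^*\leq d+1$ with $d=\frac{n(n+1)}{2}-1$ after the trace normalization, while you count the raw symmetric-matrix dimension with the bound $\leq d$ --- the two conventions land on the same number). The only imprecision, immaterial to the conclusion, is that the theorem of \cite{garatti2021therisk} is stated for the minimal support-\emph{subsample} cardinality rather than the number of support constraints (which is what allows it to dispense with non-degeneracy assumptions); since both quantities obey the same dimensional bound and $\epsilonbar$ is non-decreasing --- a monotonicity step you make explicit and the paper leaves implicit --- your argument goes through unchanged.
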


\begin{proof}
\ifextended%
The proof uses tools from scenario optimization \cite{garatti2021therisk} and convex optimization \cite{calafiore2010random}.
The proof is presented in Appendix~\ref{ssec-app-proof-thm-chance-constrained-probabilistic-bound-optim-black}.
\else%
The proof uses tools from scenario optimization \cite{garatti2021therisk} and convex optimization \cite{calafiore2010random}.
The proof is presented in the extended version.
\fi
\end{proof}


\ifextended%
The dependence of $\epsilonbar(\frac{n(n+1)}2)$ on $N$ for several values of $\beta$ and $n$ is depicted in Figure \ref{fig-epsilonbar}.
We see that $1/\epsilonbar(\frac{n(n+1)}2)$ grows polynomially with $N$.
\else%
The dependence of $\epsilonbar(\frac{n(n+1)}2)$ on $N$ for several values of $\beta$ and $n$ is presented in the extended version.
We see there that $1/\epsilonbar(\frac{n(n+1)}2)$ grows polynomially with $N$.
\fi

\ifextended
\begin{figure}
\centering
\includegraphics[width=\linewidth]{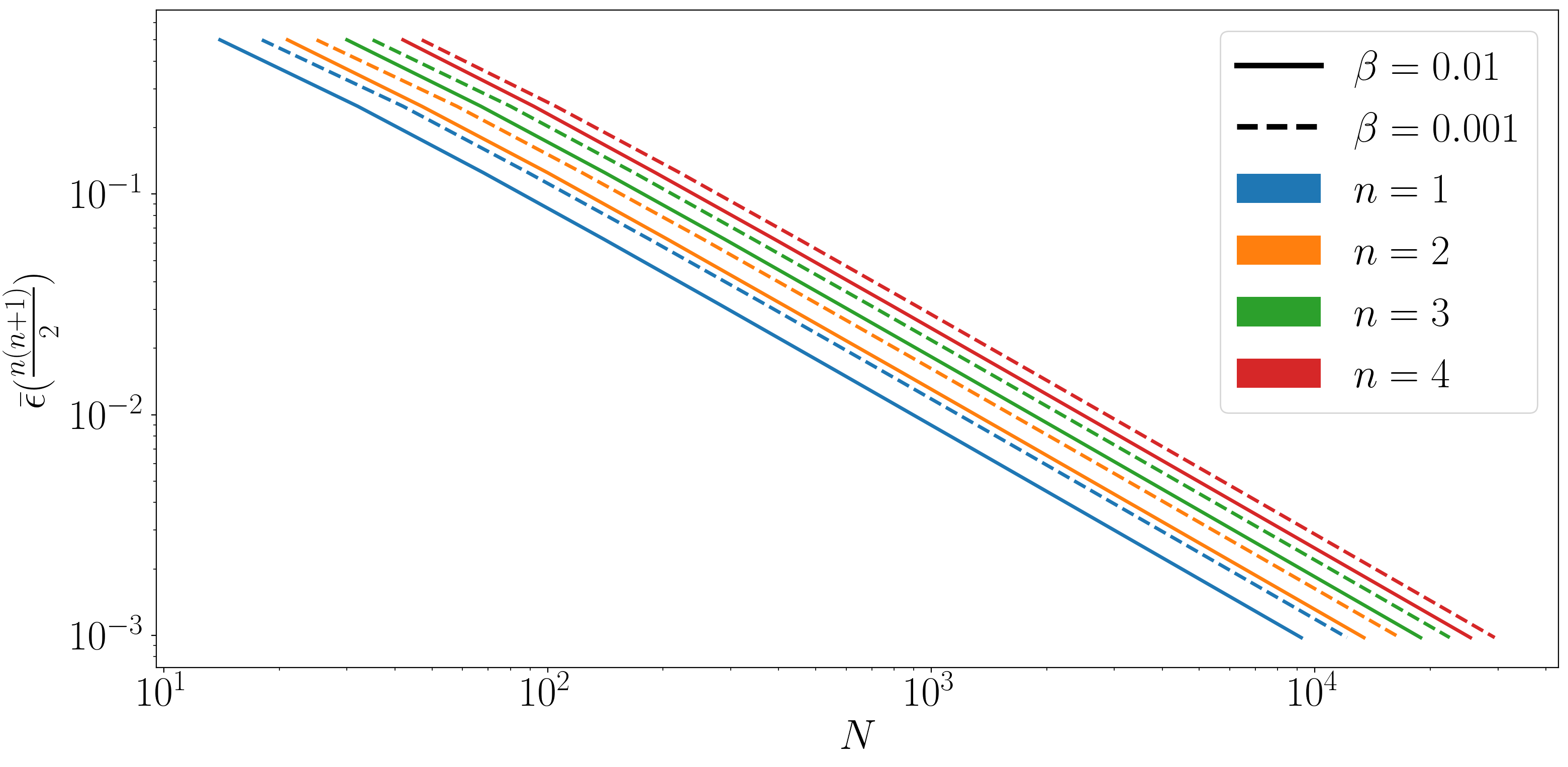}
\caption{Dependence of $\epsilonbar(\frac{n(n+1)}2)$ on $N$ for several values of $\beta$ and $n$.}
\label{fig-epsilonbar}
\end{figure}
\fi

\begin{remark}\label{rem-constraints-P}
In some situations, it is useful to constrain the variable $P$ in \eqref{eq-optim-black}; e.g., to include prior information on $P$ or to reduce the computation time by lowering the dimension of the problem.
If the additional constraints are convex, then Theorem \ref{thm-chance-constrained-probabilistic-bound-optim-black} applies in the very same way; the only thing that needs to be changed is to use $\epsilonbar(d+1)$ instead of $\epsilonbar(\frac{n(n+1)}2)$, where $d$ is the number of degrees of freedom of the variable $P$.
Furthermore, if one does not want to fix $\gamma$ a priori, but rather wants to find the smallest $\gamma$ for which \eqref{eq-optim-black} is feasible (e.g., to make the bound on the norm of the lower blocks in \eqref{eq-near-triangularizable} as small as possible), then the problem becomes quasi-convex and one has to use $\epsilonbar(2d+1)$ instead of $\epsilonbar(d+1)$;
\ifextended%
see, e.g., \cite[Theorem~3.1]{eppstein2005quasiconvex}, which provides the bound $s^*(\psi)\leq 2d+1$ in Theorem~\ref{thm-basis-cardinality} for quasi-convex problems.
\else%
see the extended version for details.
\fi
This approach is used for instance in the applications in Section \ref{sec-application-consensus-opinion-dynamics}.
\end{remark}

Theorem~\ref{thm-chance-constrained-probabilistic-bound-optim-black} states that with enough observations ($N$ large), we can assume with high confidence that $\Prob(\Psi_\Omega)$ is close to $1$.
Now, we show that we can ensure homogeneity of the components of $\Psi_\Omega$ for each $q\in\calQ$ from this property.
Therefore, we assume in the following that $\Prob$ is the \emph{uniform spherical probability distribution} on $\Re^n\times\calQ$, denoted by $\Prob_\circ$ and defined by:
$\Prob_\circ(\calX\times\{q\})=\mu^{n-1}(\calX\cap\SSb^{n-1})/Q$, where $\mu^{n-1}$ is the uniform spherical measure on $\SSb^{n-1}$
\ifextended%
(see Appendix~\ref{ssec-app-spherical-measure})
\else%
(see the extended version for details)
\fi
and $Q=\lvert\calQ\rvert$.


\begin{theorem}\label{thm-probability-homogeneity}
Consider System \eqref{eq-switched-system}.
Let $\Omega\subseteq\Re^n\times\Re^n$ and $\epsilonhat\in(0,1)$.
Let $P^\star_\Omega$ be the optimal solution of \eqref{eq-optim-black} and $\Psi_\Omega$ be as in \eqref{eq-omegabar}.
Assume that $\Prob_\circ(\Psi_\Omega)\geq1-\epsilonhat$.
Let $P^\star_\Omega=U\Sigma^2U^\top$, $\Sigmatilde$ and $\Ptilde$ be as in Theorem \ref{thm-near-triangularizable}.
Let $\eta=Q\chi(P^\star_\Omega)\epsilonhat$, where $\chi(P^\star_\Omega)$ is the skewness of $P^\star_\Omega$ (see Definition~\ref{def-skewness}).
Assume that $\eta<\frac12$ and let
\begin{equation}\label{eq-sstar}
\sbar=\sqrt{1-\Itilde_{\frac{n-1}2,\frac12}(2\eta)},
\end{equation}
where $\Itilde_{a,b}$ is the \emph{inverse regularized beta function}.%
\footnote{The inverse regularized beta function is the function $\Itilde_{a,b}:[0,1]\to[0,1]$ defined by $\Itilde_{a,b}(x)=y$ if and only if $\frac{\int_0^y t^{a-1}(1-t)^{b-1}\,\diff t}{\int_0^1 t^{a-1}(1-t)^{b-1}\,\diff t}=x$ \cite{olver2010nist}.}
It holds that for each $q\in\calQ$, $\calX_q$ is $(\epsilon,P^\star_\Omega,\Ptilde)$-homogeneous with $\epsilon=\sqrt{\sbar^{-2}-1}$, where $\calX_q$ is as in Theorem~\ref{thm-near-triangularizable}.
\end{theorem}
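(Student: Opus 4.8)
The plan is to verify, separately for each mode $q$, the homogeneity condition of Definition~\ref{def-homogeneous} for the slice $\calX_q$, with $P_1=P^\star_\Omega$, $P_2=\Ptilde$ and $\epsilon=\sqrt{\sbar^{-2}-1}$. First I would record the two structural facts about $\Psi_\Omega$ that make the spherical geometry available: the defining inequality in \eqref{eq-omegabar} is homogeneous of degree two and even in $x$, so each $\calX_q$ is a cone invariant under $x\mapsto-x$. Since $\Prob_\circ$ assigns to $\Psi_\Omega$ the average $\tfrac1Q\sum_q\mu^{n-1}(\calX_q\cap\SSb^{n-1})$, the hypothesis $\Prob_\circ(\Psi_\Omega)\geq1-\epsilonhat$ gives $\sum_q\mu^{n-1}(\calX_q^{\,c}\cap\SSb^{n-1})\leq Q\epsilonhat$, hence the per-mode bound $\mu^{n-1}(\calX_q^{\,c}\cap\SSb^{n-1})\leq Q\epsilonhat$. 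Thus, fixing $q$ and abbreviating $\calY=\calX_q$, it suffices to produce, for every $x$, a family $\calV$ of admissible perturbations with $x+v\in\calY$ and $0\in\conv(\calV)$, knowing only that the bad directions $\calY^{\,c}$ occupy spherical measure at most $Q\epsilonhat$.

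Next I would simultaneously straighten the two quadratic forms with the invertible map $M=\Ptilde^{1/2}=U\Sigmatilde U^\top$, writing $\tilde x=Mx$, $\tilde v=Mv$. The identity $\Ptilde^{-1/2}P^\star_\Omega\Ptilde^{-1/2}=\Pi$, where $\Pi$ is the Euclidean orthogonal projector onto $\Imag(P^\star_\Omega)$ --- this is exactly the purpose of the unit entries that $\Sigmatilde$ carries on the kernel block --- converts the admissibility constraints $v^\top P^\star_\Omega x=0$ and $v^\top P^\star_\Omega v\leq\epsilon^2 x^\top\Ptilde x$ into $\langle\Pi\tilde v,\Pi\tilde x\rangle=0$ and $\lVert\Pi\tilde v\rVert\leq\epsilon\lVert\tilde x\rVert$, while $x+v\in\calY\Leftrightarrow\tilde x+\tilde v\in M\calY=:\tilde\calY$, once more a cone. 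As $M$ is invertible and fixes the origin, $0\in\conv(\calV)\Leftrightarrow0\in\conv(M\calV)$, so the entire problem transports faithfully to the $\tilde{}$ picture. Normalizing $\tilde x$ onto $\SSb^{n-1}$ (legitimate by the cone property) and using the elementary fact that an orthogonal perturbation of relative size $\epsilon$ tilts a unit vector by the angle $\arctan\epsilon$, the choice $\epsilon=\sqrt{\sbar^{-2}-1}$ is exactly the one making this tilt angle $\phi$ satisfy $\cos\phi=\sbar$.

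The heart of the argument is then a spherical-cap estimate. On one hand, the admissible $\tilde v$ sweep out the cap $C(\tilde x,\phi)$, whose normalized measure is $\tfrac12 I_{\frac{n-1}2,\frac12}(\sin^2\phi)$ in the notation of the footnote to this theorem; the defining relation $\sin^2\phi=1-\sbar^2=\Itilde_{\frac{n-1}2,\frac12}(2\eta)$ fixes this measure in terms of $\eta$. On the other hand, the surrounding condition $0\in\conv(\calV)$ fails only if $\tilde\calY^{\,c}$ blacks out an entire half of that cap, an event requiring bad mass at least a half-cap. I would therefore bound the bad mass in the transported picture: the direction map $y\mapsto My/\lVert My\rVert$ pushes $\mu^{n-1}$ forward with Jacobian $\lvert\det M\rvert/\lVert My\rVert^{n}$, whose oscillation is governed precisely by the spread of the singular values of $M=U\Sigmatilde U^\top$, i.e.\ by the skewness $\chi(P^\star_\Omega)$ of Definition~\ref{def-skewness} (this is the quantitative reason the objective $\lVert P\rVert_F$ in \eqref{eq-optim-black} is chosen to keep the skewness small, using also $\trace(P^\star_\Omega)=1$ to control the singular values). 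Feeding the per-mode bound $Q\epsilonhat$ through this distortion upgrades it to $\eta=Q\chi\epsilonhat$, and the hypothesis $\eta<\tfrac12$ keeps $2\eta$ inside the domain of $\Itilde$; tracking the constants so that the bad mass falls below a half-cap of $C(\tilde x,\phi)$ then forces $0\in\conv(\calV)$, which is the desired $(\epsilon,P^\star_\Omega,\Ptilde)$-homogeneity of $\calX_q$.

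The step I expect to be the main obstacle is handling the degeneracy of $P^\star_\Omega$ together with the measure distortion. Because the kernel block contributes no bound, the admissible perturbations form an unbounded slab --- free along $\Ker(P^\star_\Omega)$ and only $\epsilon$-bounded on $\Imag(P^\star_\Omega)$ --- rather than a round ball, and the clean ``cap of half-angle $\phi$ on $\SSb^{n-1}$'' picture must be recovered carefully; the role of $\Ptilde$, with unit eigenvalues on the kernel, is precisely to reinstate a genuine inner product in which the reachable set becomes a full-dimensional symmetric body whose directions fill a cap on the full sphere (the free kernel directions only aid the surrounding, but must be incorporated without breaking the measure count). The second delicate point is calibrating the Jacobian oscillation against $\chi(P^\star_\Omega)$ with the correct constant, so that the transported bad mass is provably smaller than a half-cap and the factor $2$ in $2\eta$ comes out exactly. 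The remaining ingredients --- the cone reduction, the simultaneous diagonalization by $M$, and the elementary ``not contained in a halfspace'' characterization of $0\in\conv(\cdot)$ --- are routine.
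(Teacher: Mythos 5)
Your overall architecture coincides with the paper's: the per-mode bound $\mu^{n-1}(\SSb^{n-1}\setminus\calX_q)\leq Q\epsilonhat$, a linear straightening map (your $M=\Ptilde^{1/2}$ is the paper's $V=U\Sigmatilde^{-1}$ up to an orthogonal factor), a measure-distortion step controlled by the skewness (this is exactly Lemma~\ref{lem-spherical-measure-linear-transform}, imported from \cite{kenanian2019data}, so your Jacobian sketch is the right idea), and a slice-and-surround argument. However, the quantitative heart of the proof --- the counting that produces the constant $\Itilde_{\frac{n-1}2,\frac12}(2\eta)$ --- does not close as you have set it up. In your accounting, the cap $C(\tilde x,\phi)$ has normalized measure $\tfrac12 I_{\frac{n-1}2,\frac12}(\sin^2\phi)=\tfrac12\cdot 2\eta=\eta$, and surrounding fails on every slice of that cap only if the bad set carries mass at least half the cap, i.e.\ at least $\eta/2$. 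But your only upper bound on the bad mass is $\eta$, so $\eta\geq\eta/2$ yields no contradiction: as written, your argument proves the theorem only with $\Itilde_{\frac{n-1}2,\frac12}(4\eta)$ in place of $\Itilde_{\frac{n-1}2,\frac12}(2\eta)$, i.e.\ with a strictly larger $\epsilon$ than claimed. Even if you invoke the central symmetry of $\tilde\calY$ that you record in your first paragraph (which gives bad mass at most $\eta/2$ in each of the two antipodal caps), you only reach the borderline $\eta/2\geq\eta/2$, still not a contradiction. The paper closes the factor of $2$ differently: its slice lemma (Lemma~\ref{lem-measure-s-slice-sphere}, proved via the disintegration formula of Theorem~\ref{thm-spherical-measure-product}) ranges over \emph{both} polar regions $s\in(-1,-\sbar]\cup[\sbar,1)$, whose total measure is $I_{\frac{n-1}2,\frac12}(1-\sbar^2)=2\eta$; if every slice there had bad fraction exceeding $\theta=\tfrac12$, the bad mass would exceed $2\eta\cdot\tfrac12=\eta$, the desired contradiction. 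You flag this calibration as a ``delicate point'' to be checked, but it is not a detail: it is precisely the step that must be supplied, and supplying it forces you to abandon the single-cap picture for the two-sided family of slices (the resulting good slice may then sit at a negative height $s$, which is harmless because the construction $x=\alpha s V\xhat$ with $\alpha\geq0$, or equivalently the symmetry of $\calX_q$, covers both signs).

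Two secondary remarks. First, your transported constraint $\langle\Pi\tilde v,\Pi\tilde x\rangle=0$ is a faithful (indeed more explicit than the paper's own write-up) treatment of the condition $v^\top P^\star_\Omega x=0$ from Definition~\ref{def-homogeneous}; keep it. Second, your observation that the admissible perturbations form an unbounded slab rather than a ball is correct, and your resolution (the cap directions are all reachable, the kernel directions only help) is sound; this part matches what the paper does implicitly when it scales $\calV=\alpha\sqrt{1-s^2}V\calW$ and is not where the difficulty lies.
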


\begin{proof}
\ifextended%
See Appendix~\ref{ssec-app-proof-thm-probability-homogeneity}.
\else%
See the extended version.
\fi
\end{proof}

\ifextended%
The relation \eqref{eq-sstar} between $\sbar$ and $\eta$ is depicted in Figure \ref{fig-sstar}.
\else%
The relation \eqref{eq-sstar} between $\sbar$ and $\eta$ is presented in the extended version.
\fi

\ifextended
\begin{figure}
\centering
\includegraphics[width=\linewidth]{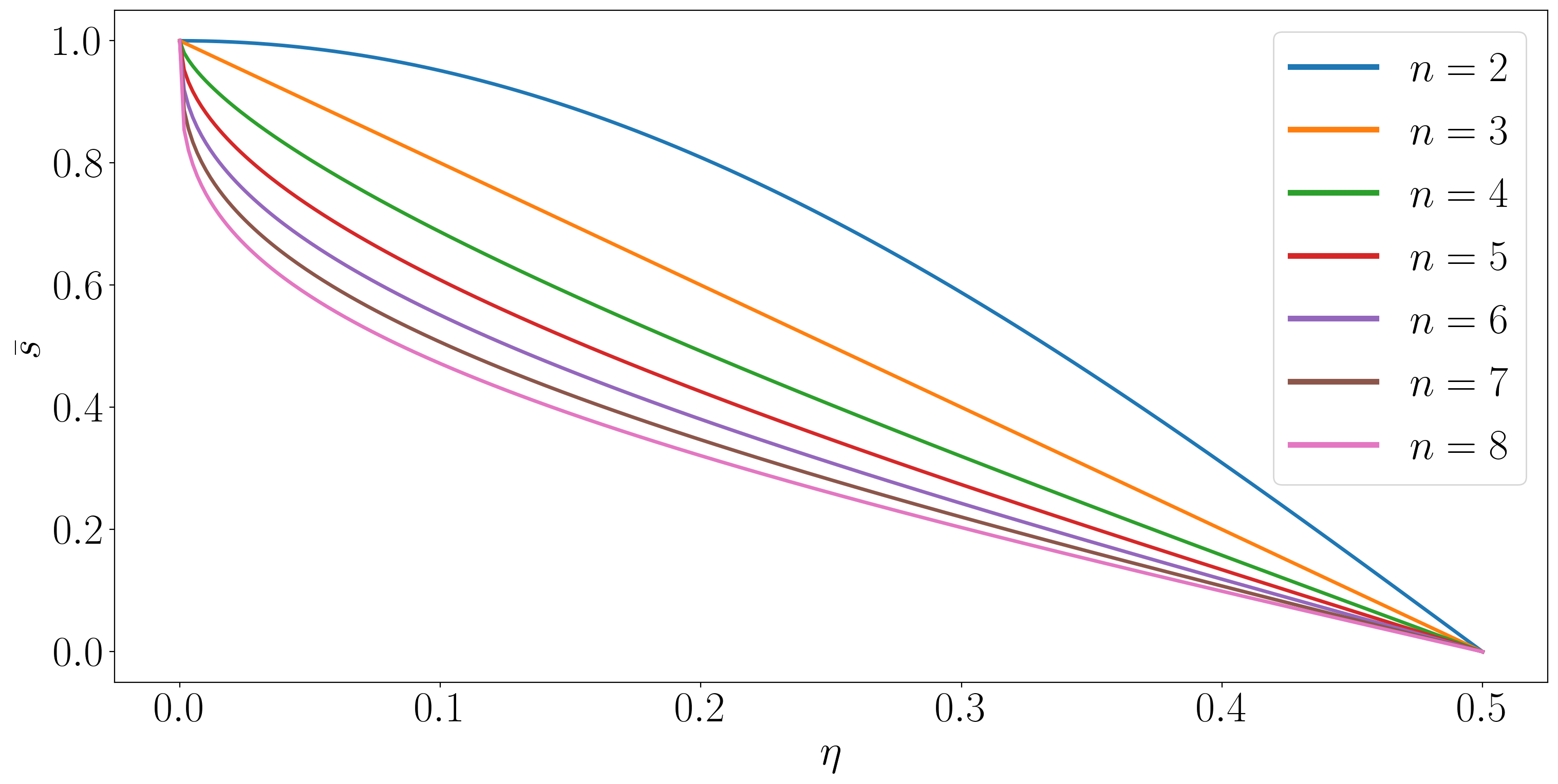}
\caption{$\sbar$ in \eqref{eq-sstar} for several values of $n$.}
\label{fig-sstar}
\end{figure}
\fi

\subsection{Main result}\label{ssec-main-result}

We are now able to state the main result of the paper.

\begin{theorem}[(Main result)]\label{thm-main-result}
Consider System \eqref{eq-switched-system}.
Let $\Omega=\{(x_i,y_i)\}_{i=1}^N$, where $y_i=A_{q_i}x_i$ and $\{(x_i,q_i)\}_{i=1}^N$ is sampled i.i.d.\ according to the uniform spherical probability distribution $\Prob_\circ$.
Let $P^\star_\Omega$ be the optimal solution of \eqref{eq-optim-black}.
Let $P^\star_\Omega=U\Sigma^2U^\top$ with $U\in\Re^\nxn$ orthogonal, $\Sigma=\diag(0_1,\ldots,0_r,\lambda_1,\ldots,\lambda_{n-r})$ and $\lambda_1,\ldots,\lambda_{n-r}>0$.
Let $\beta\in(0,1)$ and $\eta=Q\chi(P^\star_\Omega)\epsilonbar(\frac{n(n+1)}2)$, where $\epsilonbar$ is as in \eqref{eq-beta-N-eps} and $\chi(P^\star_\Omega)$ is the skewness of $P^\star_\Omega$.
Assume that $\eta<\frac12$ and let $\sbar$ be as in \eqref{eq-sstar}.
Then, with probability $1-\beta$ on the sampling of $\{(x_i,q_i)\}_{i=1}^N$, it holds that for each $q\in\calQ$, there is $A_q^{(11)}\in\Re^{r\times r}$, $A_q^{(12)}\in\Re^{r\times(n-r)}$, $A_q^{(21)}\in\Re^{(n-r)\times r}$ and $A_q^{(22)}\in\Re^{(n-r)\times(n-r)}$ such that
\[
U^\top A_qU = \left[\begin{array}{cc} A_q^{(11)} & A_q^{(12)} \\ A_q^{(21)} & A_q^{(22)} \end{array}\right],
\]
and $\lVert\Lambda A_q^{(21)}\rVert\leq\epsilon\gamma$ and $\lVert\Lambda A_q^{(22)}\Lambda^{-1}\rVert\leq\sqrt{1+\epsilon^2}\gamma$, where $\Lambda=\diag(\lambda_1,\ldots,\lambda_{n-r})$ and $\epsilon=\sqrt{\sbar^{-2}-1}$.
\end{theorem}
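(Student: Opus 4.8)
The plan is to obtain the statement as a direct composition of the three intermediate results already established, with the randomness entering only through the first of them. I would thread the parameters so that the quantities $\eta$, $\sbar$ and $\epsilon$ appearing in the conclusion are literally those produced along the chain, and then check that the hypotheses of each invoked theorem are met.

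\emph{Step 1 (probabilistic bound).} First I would instantiate Theorem~\ref{thm-chance-constrained-probabilistic-bound-optim-black} with the probability measure $\Prob=\Prob_\circ$. Since $\{(x_i,q_i)\}_{i=1}^N$ is sampled i.i.d.\ from $\Prob_\circ$ and $P^\star_\Omega$ solves \eqref{eq-optim-black}, that theorem produces an event $E$ of probability at least $1-\beta$ on the sampling, on which $\Prob_\circ(\Psi_\Omega)\geq 1-\epsilonbar(\frac{n(n+1)}{2})$. All the remaining steps are deterministic given the data, so it suffices to establish the block decomposition on $E$; the probability $1-\beta$ then carries over verbatim to the final conclusion.

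\emph{Step 2 (homogeneity).} I would then set $\epsilonhat=\epsilonbar(\frac{n(n+1)}{2})$ and apply Theorem~\ref{thm-probability-homogeneity} on $E$. Its hypothesis $\Prob_\circ(\Psi_\Omega)\geq 1-\epsilonhat$ holds by Step 1, and the quantity $\eta=Q\chi(P^\star_\Omega)\epsilonhat$ is exactly the $\eta$ of the present statement, satisfying $\eta<\frac12$ by assumption. Hence $\sbar$ in \eqref{eq-sstar} is the same, and the theorem yields that for each $q\in\calQ$ the set $\calX_q$ is $(\epsilon,P^\star_\Omega,\Ptilde)$-homogeneous with $\epsilon=\sqrt{\sbar^{-2}-1}$ --- precisely the $\epsilon$ that appears in the bounds to be proved. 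Finally I would feed this homogeneity into Theorem~\ref{thm-near-triangularizable}, whose sole structural hypothesis is exactly that each $\calX_q$ be $(\epsilon,P^\star_\Omega,\Ptilde)$-homogeneous (with $U$, $\Sigma$, $\Sigmatilde$, $\Ptilde$ as defined there and reproduced in the statement). Its conclusion is, verbatim, the decomposition \eqref{eq-near-triangularizable} together with $\lVert\Lambda A_q^{(21)}\rVert\leq\epsilon\gamma$ and $\lVert\Lambda A_q^{(22)}\Lambda^{-1}\rVert\leq\sqrt{1+\epsilon^2}\gamma$, which is what we want.

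The proof is therefore essentially bookkeeping: the mathematical content lives in the three lemmas, and the only things to verify are that the parameters match along the chain and that the hypotheses hold on $E$. The point deserving care is the logical status of $\eta$ and hence of $\epsilon$: since $\chi(P^\star_\Omega)$ is data-dependent, $\eta$ is a random quantity, and the assumption $\eta<\frac12$ should be read as an a posteriori verifiable condition that makes $2\eta<1$, so that $\sbar\in(0,1)$ and $\epsilon$ is well-defined and positive; one should additionally confirm that $\epsilon\in(0,1)$ so that Theorem~\ref{thm-near-triangularizable} is applicable. The main (and only genuine) subtlety is thus the conditioning argument of Step 1 --- ensuring that the high-probability event is the same object threaded through the two subsequent deterministic steps --- rather than any new estimate.
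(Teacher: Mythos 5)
Your proposal is correct and follows exactly the paper's own proof: instantiate Theorem~\ref{thm-chance-constrained-probabilistic-bound-optim-black} with $\Prob_\circ$ to get $\Prob_\circ(\Psi_\Omega)\geq1-\epsilonbar(\frac{n(n+1)}2)$ with probability $1-\beta$, then on that event apply Theorem~\ref{thm-probability-homogeneity} to obtain $(\epsilon,P^\star_\Omega,\Ptilde)$-homogeneity of each $\calX_q$, and conclude via Theorem~\ref{thm-near-triangularizable}. Your added bookkeeping remarks (the data-dependence of $\eta$, and checking $\epsilon\in(0,1)$ for the hypotheses of Theorem~\ref{thm-near-triangularizable}) are careful observations that the paper's own two-line proof leaves implicit, but they do not change the argument.
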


\begin{proof}
Let $\epsilonhat=\epsilonbar(\frac{n(n+1)}2)$.
By Theorem \ref{thm-chance-constrained-probabilistic-bound-optim-black}, it holds that with probability $1-\beta$ on the sampling, $\Prob_\circ(\Psi_\Omega)\geq1-\epsilonhat$.
Whenever this is the case, it holds by Theorem \ref{thm-probability-homogeneity} that for each $q\in\calQ$, $\calX_q$ is $(\epsilon,P^\star_\Omega,\Ptilde)$-homogeneous, where $\calX_q$ is as in Theorem~\ref{thm-near-triangularizable}.
Hence, we can apply Theorem \ref{thm-near-triangularizable}, concluding the proof.
\end{proof}

Theorem \ref{thm-main-result} says that if we sample enough points ($N$ large), then, unless we are unlucky in our sampling (which happens with probability at most $\beta$), we can find an orthonormal basis in which we can bound the deviation of System \eqref{eq-switched-system} from an exact block-triangular form as in \eqref{eq-triangularizable}.
\ifextended%
A flowchart of the framework and the computation of the different quantities is presented in Figure \ref{fig-flowchart}.
\else%
A flowchart of the framework and the computation of the different quantities is presented in the extended version.
\fi

\ifextended
\begin{figure}
\centering
\includegraphics[width=\linewidth]{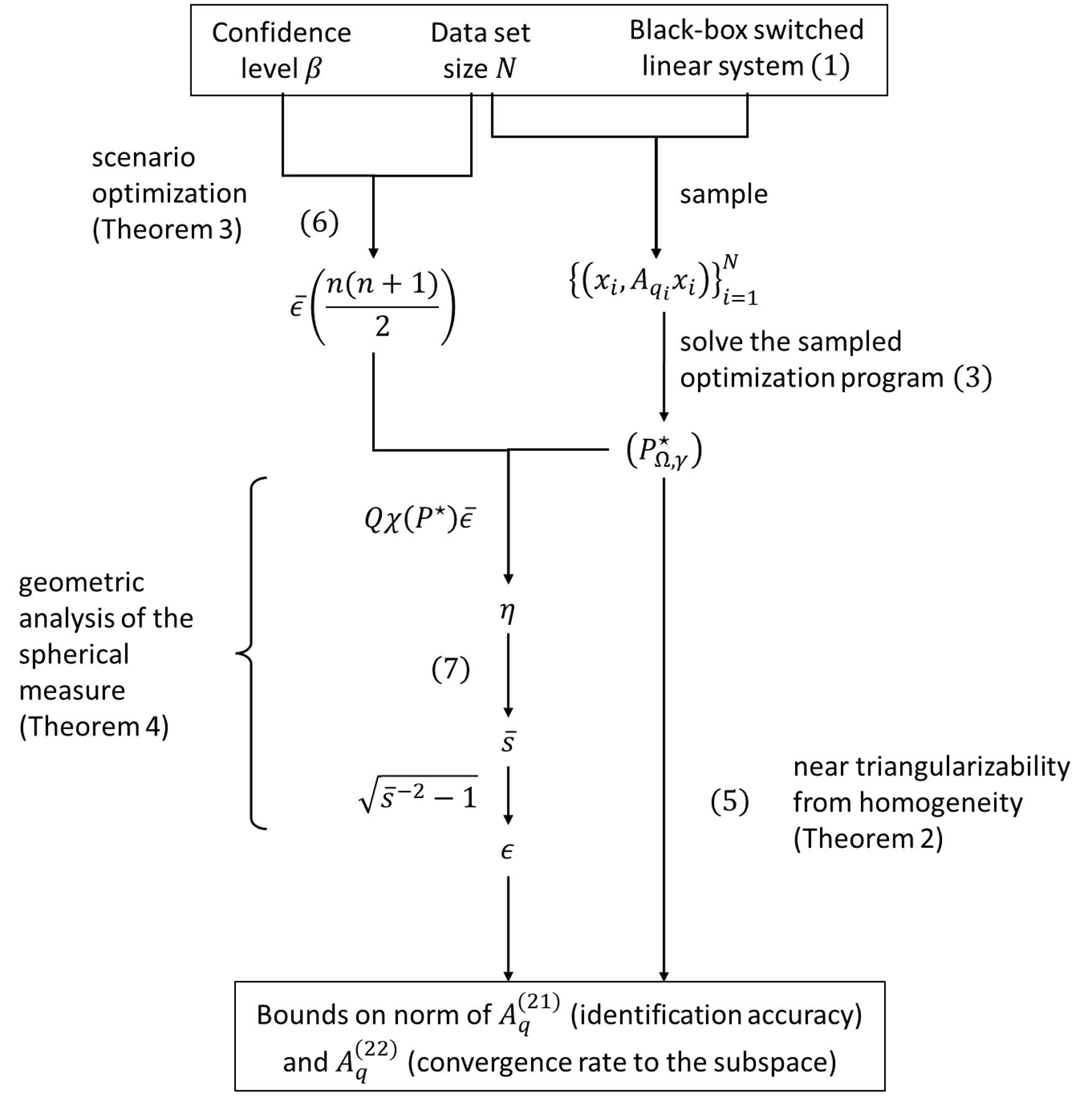}
\caption{Flowchart of the data-driven Lyapunov framework described in Theorem \ref{thm-main-result}.}
\label{fig-flowchart}
\end{figure}
\fi

There are several sources of conservatism in the derivation of Theorem~\ref{thm-main-result}, the main ones are:
\begin{itemize}
\item The condition of homogeneity (Definition \ref{def-homogeneous}) involves every $x\in\Re^n$, while in the proof of Theorem \ref{thm-near-triangularizable}, we need the property of homogeneity to hold only for $x\in\Ker(P^\star_\Omega)\cup\Imag(P^\star_\Omega)$.
The reason we do not refine the definition of homogeneity is that uniform homogeneity is the best that can be deduced from the knowledge that $\Prob_\circ(\Psi_\Omega)\geq1-\epsilonbar$ (Theorem \ref{thm-probability-homogeneity}).
\item In the proof of Theorem \ref{thm-chance-constrained-probabilistic-bound-optim-black}, we use the upper bound $s^*(\psi)\leq d+1$ with $d=\frac{n(n+1)}2-1$, while in practice the value of $s^*(\psi)$ can be much smaller.
However, as noted in \cite[p.\ 609]{garatti2021therisk}, computing the exact value of $s^*(\psi)$ can be difficult as it amounts to look at all possible sub-sequences $\varphi$ of $\psi$, whose number grows combinatorially with $N$.
For this reason, we have considered only the upper bound in the numerical examples.
\item In the proof of Theorem \ref{thm-probability-homogeneity}, it is actually shown that $\calX_q$ is $(\epsilon,\Ptilde,\Ptilde)$-homogeneous, while only the $(\epsilon,P^\star_\Omega,\Ptilde)$-homogeneity (which follows from $P^\star_\Omega\preceq\Ptilde$) is used.
When $P^\star_\Omega$ has many zero eigenvalues, the difference between $P^\star_\Omega$ and $\Ptilde$ can be large.
\ifextended%
The reason we do not refine this result is that the application of Lemma \ref{lem-measure-s-slice-sphere} requires to take slices of a sphere, which can be obtained (after a change of coordinates) only from a non-degenerate ellipsoid.
\else%
The reason we do not refine this result is that the derivation of Theorem \ref{thm-probability-homogeneity} (see the extended version for details) requires to take slices of a sphere, which can be obtained (after a change of coordinates) only from a non-degenerate ellipsoid.
\fi
\end{itemize}

\section{Application to consensus and opinion dynamics}\label{sec-application-consensus-opinion-dynamics}

The link between the existence of an invariant subspace for the system and a common block-triangularization of the system was established in Proposition~\ref{pro-triangularized}.
However, we have not discussed this link when one has only a common \emph{near} block-triangularization of the system as the one provided in Theorem~\ref{thm-main-result}.
In fact, in general, it is not possible to ensure the existence of an invariant subspace from a non-exact block-triangularization of the system, without further information about the system.
However, there are applications for which the available prior information about the system allows us to do so.
We discuss two such applications in this section.

\subsection{Consensus over switching hidden network}\label{ssec-application-consensus}

We consider the problem of consensus over a switching hidden network.
The evolution of the value of the nodes in the network over time can be modeled as a switched linear system \eqref{eq-switched-system}, where $\xi(t)$ is the state vector ($n$ is the number of nodes) at time $t$ and $A_{\sigma(t)}$ is the \emph{unknown} interaction matrix at time $t$ (see, e.g., \cite{jadbabaie2003coordination}).
For example, let us consider the system described by the networks $\{\calG_q\}_{q\in\calQ}$ depicted in Figure \ref{fig-consensus-graphs} ($n=8$, $Q=3$), and where the value of each node $j\in\{1,\ldots,n\}$ is updated following the rule
\begin{equation}\label{eq-consensus-rule}
\xi^{(j)}(t+1) = \frac1{1+\lvert\calN_j^{\sigma(t)}\rvert}\Big(\xi^{(j)}(t) + \sum_{j'\in\calN_j^{\,\sigma(t)}} \xi^{(j')}(t)\Big),
\end{equation}
with $\calN_j^q$ the set of nodes adjacent to $j$ in $\calG_q$.

\begin{figure}
\centering
\includegraphics[width=0.9\linewidth]{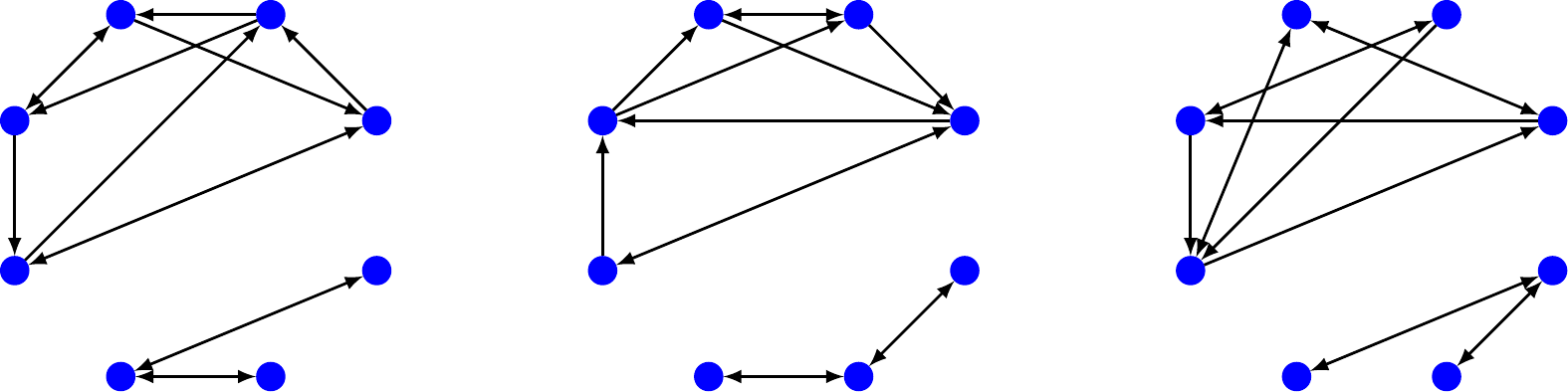}
\caption{Networks of nodes.
A node $j_1$ is \emph{adjacent} to a node $j_2$ if there is an edge from $j_2$ to $j_1$ in the network.}
\label{fig-consensus-graphs}
\end{figure}

In our setting, the networks are not known.
The only things we know are the dimension $n$ and the number of modes $Q$ (or an upper bound on $Q$).
The goal is to identify groups of nodes that \emph{do not} interact with each other, that is, groups of nodes that are \emph{disconnected} in the networks $\{\calG_q\}_{q\in\calQ}$.
To address this problem with an invariant subspace approach, let us introduce the following result.


\begin{proposition}\label{pro-components-invariant-subspace}
Let $\calG$ be a simple directed graph with $n$ nodes, and let $A\in\Re^\nxn$ be the matrix of the associated linear system given by \eqref{eq-consensus-rule}.
Let $\calV\subseteq\{1,\ldots,n\}$ and let $u\in\{0,1\}^n$ be the vector defined by, for all $j\in\{1,\ldots,n\}$, $u^{(j)}=1$ if and only if $j\in\calV$.
Then, $\calV$ and $\{1,\ldots,n\}\setminus\calV$ are disconnected if and only if $Au=u$.
\end{proposition}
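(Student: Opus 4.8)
The plan is to prove both implications at once by a single entrywise computation, exploiting that each row of $A$ is a convex combination with strictly positive weights.

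First I would read off $A$ from the update rule \eqref{eq-consensus-rule}: the $j$-th row has the entry $\frac{1}{1+\lvert\calN_j\rvert}$ in every column $k\in\{j\}\cup\calN_j$ and $0$ elsewhere, so $A$ is nonnegative and row-stochastic, with the support of row $j$ equal to $\{j\}\cup\calN_j$. Applying this to the indicator vector $u$ gives
\[
(Au)^{(j)} = \frac{1}{1+\lvert\calN_j\rvert}\Big(u^{(j)} + \sum_{j'\in\calN_j} u^{(j')}\Big),
\]
which is a weighted average, with strictly positive weights, of the numbers $u^{(k)}\in\{0,1\}$ for $k\in\{j\}\cup\calN_j$.

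The key step is to characterize when $(Au)^{(j)}=u^{(j)}$ coordinate by coordinate. Rearranging this identity yields $\sum_{j'\in\calN_j}\big(u^{(j')}-u^{(j)}\big)=0$. Because $u^{(j)}\in\{0,1\}$ is an extreme value of $[0,1]$, all the summands $u^{(j')}-u^{(j)}$ share the same sign (all $\leq 0$ when $u^{(j)}=1$, all $\geq 0$ when $u^{(j)}=0$); a vanishing sum of same-signed terms forces each term to be zero, i.e.\ $u^{(j')}=u^{(j)}$ for every $j'\in\calN_j$. Conversely, if $u$ is constant on $\{j\}\cup\calN_j$, then $(Au)^{(j)}=u^{(j)}$ trivially. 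Hence $Au=u$ holds if and only if, for every $j$ and every $j'\in\calN_j$, one has $u^{(j)}=u^{(j')}$; that is, $u$ takes the same value along every edge of $\calG$.

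It then remains to translate this into disconnectedness. Since $u$ is the indicator of $\calV$, the value of $u$ changes across an edge exactly when that edge joins $\calV$ to its complement. Thus ``$u$ is constant along every edge'' is precisely ``no edge joins $\calV$ and $\{1,\ldots,n\}\setminus\calV$ in either direction'', which is the asserted disconnectedness. The only point demanding care is the orientation bookkeeping: by the adjacency convention of Figure~\ref{fig-consensus-graphs}, a node is adjacent to $j$ exactly when $j$ points to it, so $\calN_j$ is the out-neighborhood of $j$; one should check that scanning $\sum_{j'\in\calN_j}$ over all $j$ enumerates every directed edge, so that constancy of $u$ along all edges $j\to j'$ indeed rules out crossing edges in both directions. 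Beyond this bookkeeping the argument is routine, and no genuine obstacle arises.
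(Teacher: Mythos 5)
Your proof is correct and takes essentially the same route as the paper's: both arguments rest on the fact that row $j$ of $A$ is a strictly positive convex combination over $\{j\}\cup\calN_j$, so a $\{0,1\}$-valued fixed point must be constant on each such neighborhood, and your vanishing-sum-of-same-signed-terms step is exactly the paper's strict inequalities $[Au]^{(j_1)}<u^{(j_1)}$ (resp.\ $>$) in its contrapositive argument. Your orientation bookkeeping (reading $\calN_j$ as the out-neighborhood, so that the pairs $(j,j')$ with $j'\in\calN_j$ enumerate all directed edges) also matches the paper's usage, so there is no gap.
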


\begin{proof}
The ``only if'' direction is clear.
For the ``if'' direction, we proceed by contraposition.
Therefore, assume that there is an edge from a node $j_1\in\calV$ to a node $j_2\notin\calV$ in $\calG$.
Then, $[Au]^{(j_1)}<u^{(j_1)}$, so that $Au\neq u$.
Similarly, if there is an edge from a node $j_1\notin\calV$ to a node $j_2\in\calV$ in $\calG$, then $[Au]^{(j_1)}>u^{(j_1)}$, so that $Au\neq u$.
\end{proof}

To find an invariant subspace of the system given by Figure \ref{fig-consensus-graphs} and \eqref{eq-consensus-rule}, we use the data-driven quadratic Lyapunov framework described in Section \ref{sec-problem-setting}.
First, we compute the optimal solution of \eqref{eq-optim-black} with a relatively small sample set of one-step trajectories $\{(x_i,y_i)\}_{i=1}^{N_\smalltxt}$, $N_\smalltxt=2000$, of the system.
The kernel of the associated matrix $P^\star_\smalltxt$ is given by
\[\Imag\left[\!{
\begin{array}{c@{\;\;}c@{\;\;}c@{\;\;}c@{\;\;}c@{\;\;}c@{\;\;}c@{\;\;}c}
0.26 & 0.25 & 0.25 & 0.22 & 0.29 & -0.48 & -0.49 & -0.46 \\
0.38 & 0.33 & 0.39 & 0.31 & 0.42 & 0.32 & 0.33 & 0.33
\end{array}}\!\right]^\top,
\]
which is close to the subspace $\calU\coloneqq\spann\{u_1,u_2\}$ where $u_1=[1,1,1,1,1,0,0,0]^\top$ and $u_2=[0,0,0,0,0,1,1,1]^\top$.%
\footnote{Here, we used, in a \emph{compressed sensing} fashion, the prior information that the invariant subspace of interest, if it exists, is spanned by binary vectors.}
Hence, we suspect that for all $q\in\calQ$, the connected components of $\calG_q$ have the form $\calV=\{1,\ldots,n\}$, or $\calV\subseteq\{1,\ldots,5\}$, or $\calV\subseteq\{6,\ldots,8\}$.

To verify this hypothesis with high confidence, we solve \eqref{eq-optim-black} with a larger sample set of one-step trajectories ($N_\largetxt=247\,122\,000$) and fixing $P=U^{(2)}(U^{(2)})^\top$ where the columns of $U^{(2)}\in\Re^{n\times(n-2)}$ are an orthonormal basis of $\calU^\perp$.
This time, we also try to minimize $\gamma$ (see also Remark~\ref{rem-constraints-P}).
This provides $\gamma^\star=0.69$.
By fixing the confidence level to $\beta=0.01$, it follows that $\epsilonbar(1)=1.86\cdot10^{-8}$.%
\footnote{We use $\epsilonbar(1)$ because $P$ is fixed, so that $d=0$; see Remark \ref{rem-constraints-P}.}
By Theorem \ref{thm-main-result}, it then follows that the matrices of the system admit a decomposition \eqref{eq-near-triangularizable}, with $\epsilon=0.122$, $\gamma=0.69$, $\Lambda=I_{n-2}$ and $U=\big[\frac{u_1}{\sqrt{5}},\frac{u_2}{\sqrt{3}},U^{(2)}\big]$.

From this decomposition, we can now deduce that for each $q\in\calQ$, the connected components of $\calG_q$ have the form $\calV=\{1,\ldots,n\}$, or $\calV\subseteq\{1,\ldots,5\}$, or $\calV\subseteq\{6,\ldots,8\}$.
Indeed, assume there is a connected component $\calV\subseteq\{1,\ldots,n\}$ which has not this form, and let $u\in\{0,1\}^n$ be defined by, for all $j\in\{1,\ldots,n\}$, $u^{(j)}=1$ if and only if $j\in\calV$.
Let $n_1=\lvert\calV\cap\{1,\ldots,5\}\rvert$ and $n_2=\lvert\calV\cap\{6,\ldots,8\}\rvert$.
By assumption on $\calV$, $n_1,n_2\geq1$ and $n_1+n_2\leq7$.
Now, let $u'=U^\top u = [(u'^{(1:2)})^\top,(u'^{(3:8)})^\top]^\top$, where $u'^{(1:2)}\in\Re^2$ and $u'^{(3:8)}\in\Re^6$.
It holds that $u'^{(1:2)}=\big[\frac{n_1}{\sqrt{5}},\frac{n_2}{\sqrt{3}}\big]$. 
Fix $q\in\calQ$.
If $\calV$ is a connected component of $\calG_q$, it holds by Proposition \ref{pro-components-invariant-subspace} that $A_qu=u$, so that $A^{(21)}_qu'^{(1:2)}+A^{(22)}_qu'^{(3:8)} = u'^{(3:8)}$.
By the above, it follows that
\[
\lVert u'^{(1:2)}\rVert\geq\frac{1-\sqrt{1+\epsilon^2}\gamma}{\epsilon\gamma}\lVert u'^{(3:8)}\rVert\geq3.62\lVert u'^{(3:8)}\rVert.
\]
This implies that $\lVert u'^{(1:2)}\rVert\geq(1+(1/3.62)^2)^{-1/2}\lVert u'\rVert\geq0.96\lVert u'\rVert$, so that $\frac{n_1^2}5+\frac{n_2^2}3\geq0.96^2(n_1+n_2)$.
The maximum of $(\frac{n_1^2}5+\frac{n_2^2}3)/(n_1+n_2)$ is reached for $n_1=5$ and $n_2=2$, and equals $0.905$.
The latter is smaller than $0.96^2=0.9216$.
Hence, it follows that $u$ cannot satisfy $Au=u$, thus we can assert, with confidence level $99\%$, that $\calV$ is not a connected component of $\calG_q$.

\subsection{Opinion dynamics with antagonistic interactions}

We consider a problem of opinion dynamics, where many agents (e.g., the population of a country) are divided into four groups (e.g., based on their political affinity), and these agents exchange opinions about topics.
The interactions between the agents influence their opinion, and the relationships can be either friendly (in which case interaction of $A$ with $B$ increases the opinion of $A$ with the opinion of $B$) or antagonistic (in which case interaction of $A$ with $B$ increases the opinion of $A$ with the opposite of the opinion of $B$) \cite{altafini2013consensus}.
More precisely, for this example, the interaction patterns between the different groups can take three different values $(Q=3)$, represented in Figure \ref{fig-opinion-graphs}, and the value of the opinion of each group $j\in\{1,\ldots,4\}$ is updated as follows:
\begin{equation}\label{eq-opinion-rule}
\xi^{(j)}(t+1) = \frac1{1+n_j^{\sigma(t)}}\Big(\xi^{(j)}(t) + \sum_{j'\neq j} \sign^{\sigma(t)}_{j,j'}\xi^{(j')}(t)\Big),
\end{equation}
where $n_j^q$ is the number of groups with which $j$ interacts in the pattern $q$, $\sign^q_{j,j'}=1$ (resp.\ $-1$) if there is a friendly (resp.\ antagonistic) interaction of $j$ with $j'$, and $\sign^q_{j,j'}=0$ if there is no interaction.

\begin{figure}
\centering
\includegraphics[width=0.9\linewidth]{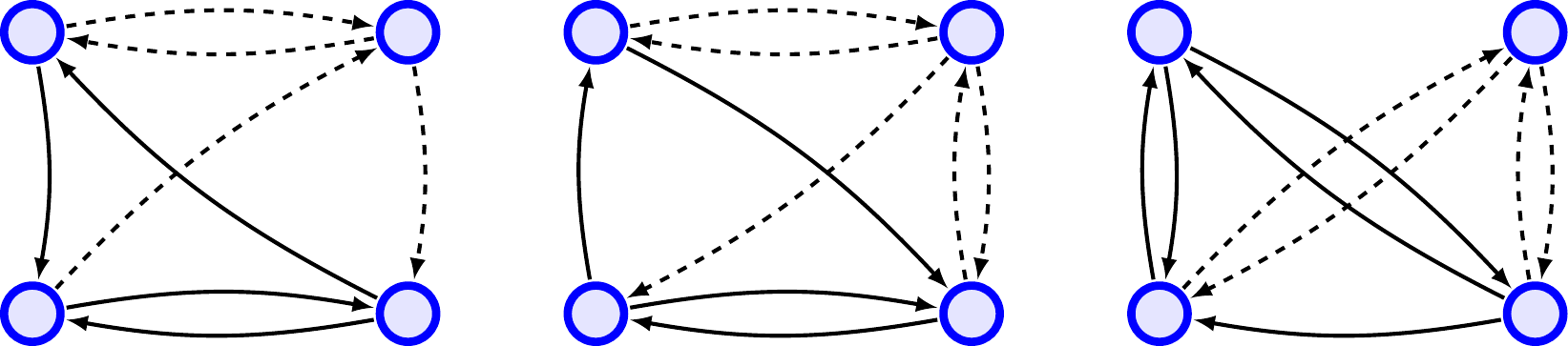}
\caption{Interaction patterns.
A solid (dashed) arrow from $A$ to $B$ indicates an interaction of $A$ with $B$ with friendly (antagonistic) relationship.}
\label{fig-opinion-graphs}
\end{figure}

In our setting, we do not know what are the interaction patterns; we only know an upper bound on $Q$.
But, we have access to the opinion vector (e.g., via polls or by counting the number of \emph{likes} on social networks) at different time instants, which allows us to collect sample trajectories of the system.
We are interested in deciding whether there is a \emph{stable} opinion vector, that is, whether there is a normalized vector $u\in\Re^4$, such that $A_qu=u$ for all $q\in\calQ$ (where $A_q$ is given by \eqref{eq-opinion-rule} with the interaction pattern $q$).
Our results do not allow us to ensure the existence of such a stable opinion vector; however, we are able to decide what will be this stable vector if it exists.%
\footnote{Once a potential stable vector is identified, one can apply further data-driven analysis to get a high confidence that this vector is indeed a stable one; however, for the sake of brievety and simplicity, we restrict here to the identification of potential stable vectors.}

To identify such a potential stable vector, we apply the data-driven quadratic Lyapunov framework described in Section \ref{sec-problem-setting}.
First, we compute the optimal solution of \eqref{eq-optim-black} with a relatively small sample set of one-step trajectories $\{(x_i,y_i)\}_{i=1}^{N_\smalltxt}$, $N_\smalltxt=2000$, of the system.
The associated optimal matrix satisfies $\Ker(P^\star_\smalltxt)=\spann\{u\}$, where $u=[1,-1,1,1]^\top$.
Hence, we suspect that $u$ is a stable vector of the system.
To verify this hypothesis with high confidence, we solve \eqref{eq-optim-black} with a larger sample set of one-step trajectories ($N_\largetxt=8\,142\,000$) and fixing $P=U^{(2)}(U^{(2)})^\top$ where the columns of $U^{(2)}\in\Re^{n\times(n-1)}$ are an orthonormal basis of $\{u\}^\perp$.
This time, we also try to minimize $\gamma$ (see also Remark~\ref{rem-constraints-P}).
This provides $\gamma^\star=0.334$.
By fixing the confidence level to $\beta=0.01$, it follows that $\epsilonbar(1)=5.67\cdot10^{-7}$.%
\footnote{We use $\epsilonbar(1)$ because $P$ is fixed, so that $d=0$; see Remark \ref{rem-constraints-P}.}
By Theorem \ref{thm-main-result}, it then follows that the matrices of the system admit a decomposition \eqref{eq-near-triangularizable}, with $\epsilon=0.02$, $\gamma=0.334$, $\Lambda=I_{n-1}$ and $U=[u,U^{(2)}]$.

From this decomposition, we deduce that if the system has a stable vector, then it must be close to $u$.
Indeed, assume there is a normalized stable vector $v\in\Re^4$ and let $v'=U^\top v=[v'^{(1)},(v'^{(2:3)})^\top]^\top$, where $v'^{(1)}\in\Re$ and $v'^{(2:3)}\in\Re^3$.
Fix $q\in\calQ$.
Since $v$ is a stable vector, it holds that $A_qv=v$, so that $A^{(21)}_qv'^{(1)}+A^{(22)}_qv'^{(2:3)} = v'^{(2:3)}$.
By the above, it follows that
\[
\lvert v'^{(1)}\rvert\geq\frac{1-\sqrt{1+\epsilon^2}\gamma}{\epsilon\gamma}\lVert v'^{(2:3)}\rVert\geq99\lVert v'^{(2:3)}\rVert.
\]
This implies that $\lvert v'^{(1)}\rvert\geq(1+(1/99)^2)^{-1/2}\geq0.9999$.
Hence, we finally deduce, with confidence level $99\%$, that any normalized stable vector $v$ for the system, if it exists, must be close to $u$, in the sense that $\lVert u-\pm v\rVert\leq0.0001$.

\section{Conclusions}\label{sec-conclusions}

We introduced a quadratic Lyapunov framework for data-driven identification of potential invariant subspaces of black-box switched linear systems.
This framework allows us to identify a potential invariant subspace for the system without knowing any mathematical model, by computing a quadratic Lyapunov function from a data-driven optimization program.
We then leverage results from scenario optimization, quasi-convex optimization, and geometric analysis, to come up with probabilistic guarantees on the identification accuracy.
We demonstrated the applicability of our framework on problems of consensus and opinion dynamics, for which the existence of an invariant subspace bears useful information about the system, thereby allowing us to study the dynamics of these systems in a data-driven way.

For further work, we plan to further investigate the possibility of adding constraints in the optimization program in order to leverage prior information about the system or the sampling (for instance, in the case of adaptive sampling), and also to fight the curse of dimensionality.
Another angle of attack to fight the curse of dimensionality is to refine the result asserting the homogeneity of a set from its probability measure, or to relax the condition of homogeneity while keeping the property of a bound on the identification accuracy.
Finally, we plan to provide other data-driven analysis results for the identification of invariant subspaces, for instance, regarding the minimal growth rate of the trajectories on the identified subspace, thereby allowing us to provide guarantees on the existence of an invariant subspace without relying on prior knowledge or assumptions as in the presented applications.


\ifextended

\section*{APPENDIX}

\subsection{Proof of Theorem \ref{thm-near-triangularizable}}\label{ssec-app-proof-thm-near-trianularizable}

Let $q\in\calQ$, $P=P^\star_\Omega$ and $U=[U^{(1)},U^{(2)}]$, where $U^{(1)}\in\Re^{n\times r}$ and $U^{(2)}\in\Re^{n\times(n-r)}$.

\emph{Step 1:} First, we show that $\lVert\Lambda A_q^{(21)}\rVert\leq\epsilon\gamma$.
Therefore, let $x'\in\Re^r$ and denote $x=U^{(1)}x'$.
It holds that $x^\top\Ptilde x=(U^{(1)}x')^\top U\Sigmatilde^2U^\top(U^{(1)}x')=x'^\top x'=\lVert x'\rVert^2$ and $(A_qx)^\top P(A_qx)=(A_qU^{(1)}x')^\top U\Sigma U^\top(A_qU^{(1)}x')=(A_q^{(21)}x')^\top\Lambda^2(A_q^{(21)}x')=\lVert\Lambda A_q^{(21)}x'\rVert^2$.
Using the homogeneity assumption, let $\{v_j\}_{j=1}^m\subseteq\Re^n$ and $\{\alpha_j\}_{j=1}^m\subseteq[0,1]$ be such that $\sum_{j=1}^m\alpha_jv_j=0$, $\sum_{j=1}^m\alpha_j=1$ and for each $j\in\{1,\ldots,m\}$, $v_j\!^\top Pv_j\leq\epsilon^2x^\top \Ptilde x$ and $x+v_j\in\calX_q$.
We have that
{\allowdisplaybreaks\begin{align*}
(A_qx)^\top P(A_qx) &= (\sum_{j=1}^m\alpha_jA_q(x+v_j))^\top P \\[-10pt]
&\hspace{2.8cm}(\sum_{j=1}^m\alpha_jA_q(x+v_j)) \\
\text{($P$ is PSD)}\quad &\leq \sum_{j=1}^m\alpha_j(A_q(x+v_j))^\top P(A_q(x+v_j)) \\
\text{(definition of $\calX_q$)}\quad &\leq \sum_{j=1}^m\alpha_j\gamma^2(x+v_j)^\top P(x+v_j) \\
\text{($x\in\Ker(P)$)}\quad &= \sum_{j=1}^m\alpha_j\gamma^2v_j\!^\top Pv_j \\
\text{(definition of $v_j$)}\quad &\leq \sum_{j=1}^m\alpha_j\epsilon^2\gamma^2x^\top\Ptilde x = \epsilon^2\gamma^2\lVert x'\rVert^2.
\end{align*}}%
Hence, $\lVert\Lambda A_q^{(21)}x'\rVert\leq\epsilon\gamma\lVert x'\rVert$.
Since $x'\in\Re^r$ was arbitrary, this holds for every $x'\in\Re^r$.
Hence, by definition of the spectral norm, $\lVert\Lambda A_q^{(21)}\rVert\leq\epsilon\gamma$, concluding Step 1.

\emph{Step 2:} Secondly, we show that $\lVert\Lambda A_q^{(22)}\Lambda^{-1}\rVert\leq\sqrt{1+\epsilon^2}\gamma$.
Therefore, let $x'\in\Re^{n-r}$ and denote $x=U^{(2)}x'$.
It holds that $x^\top Px=\lVert\Lambda x'\rVert^2$ and $(A_qx)^\top P(A_qx)=\lVert\Lambda A_q^{(22)}x'\rVert^2$.
Using the homogeneity assumption, let $\{v_j\}_{j=1}^m\subseteq\Re^n$ and $\{\alpha_j\}_{j=1}^m\subseteq[0,1]$ be such that $\sum_{j=1}^m\alpha_jv_j=0$, $\sum_{j=1}^m\alpha_j=1$ and for each $j\in\{1,\ldots,m\}$, $v_j\!^\top Px=0$, $v_j\!^\top Pv_j\leq\epsilon^2x^\top \Ptilde x$ and $x+v_j\in\calX_q$.
By using the same reasoning as in Step 1, we have that
{\allowdisplaybreaks\begin{align*}
(A_qx)^\top P(A_qx) &\leq \sum_{j=1}^m\alpha_j\gamma^2(x+v_j)^\top P(x+v_j) \\
\text{($v_j\!^\top Px=0$)}\quad &= \sum_{j=1}^m\alpha_j\gamma^2 (x^\top Px + v_j\!^\top Pv_j) \\
\text{(definition of $v_j$)}\quad &\leq \sum_{j=1}^m\alpha_j(1+\epsilon^2)\gamma^2x^\top\Ptilde x \\
&= (1+\epsilon^2)\gamma^2\lVert x'\rVert^2.
\end{align*}}%
Hence, $\lVert\Lambda A_q^{(22)}x'\Lambda^{-1}\rVert\leq\sqrt{1+\epsilon^2}\gamma\lVert x'\rVert$.
Since $x'\in\Re^{n-r}$ was arbitrary, this holds for every $x'\in\Re^{n-r}$.
Hence, by definition of the spectral norm, $\lVert\Lambda A_q^{(22)}\Lambda^{-1}\rVert\leq\sqrt{1+\epsilon^2}\gamma$, concluding the proof.

\subsection{Proof of Theorem \ref{thm-chance-constrained-probabilistic-bound-optim-black}}\label{ssec-app-proof-thm-chance-constrained-probabilistic-bound-optim-black}

Let $\Prob$ be as in Theorem~\ref{thm-chance-constrained-probabilistic-bound-optim-black}.
We will need the following key result from chance-constrained optimization.

\begin{theorem}[{\cite[Theorem 1]{garatti2021therisk}}]\label{thm-chance-constrained-probabilistic-bound}
Let $\beta\in(0,1)$ and $N\in\Ne$.
Let $\epsilonbar:\{0,\ldots,N\}\to[0,1]$ be as in Theorem~\ref{thm-chance-constrained-probabilistic-bound-optim-black}.
It holds that
\begin{equation}\label{eq-bound-risk}
\Prob^N(\{\psi\in(\Re^n\times\calQ)^N : \Prob(\Psi_\psi)<1-\epsilonbar(s^*(\psi))\})\leq\beta,
\end{equation}
where for all $\psi=((x_i,q_i))_{i=1}^N$, $\Omega_\psi=\{(x_i,A_{q_i}x_i)\}_{i=1}^N$ and $\Psi_\psi=\Psi_{\Omega_\psi}$ is as in \eqref{eq-omegabar}, and $s^*(\psi)$ is the smallest length of a subsequence $\varphi$ of $\psi$ such that $P^\star_{\Omega_{\varphi}}=P^\star_{\Omega_\psi}$.
\end{theorem}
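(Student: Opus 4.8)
The plan is to invoke the wait-and-judge scenario machinery of Garatti and Campi, after checking that program \eqref{eq-optim-black} fits the required template. First I would verify the structural hypotheses: the objective $\lVert P\rVert_F^2$ is strictly convex, each sampled constraint $y_i^\top Py_i\le\gamma^2 x_i^\top Px_i$ is \emph{linear} in the symmetric matrix $P$, and the remaining constraints $P\succeq0$, $\trace(P)=1$ are convex. Hence, whenever a minimizer exists it is unique, so no tie-breaking rule is needed and $P^\star_{\Omega_\psi}$ is a well-defined measurable function of $\psi$. With this in place, $\Psi_\psi$ is exactly the set of point--mode pairs satisfied by $P^\star_{\Omega_\psi}$, so $1-\Prob(\Psi_\psi)$ is the \emph{risk} (fresh-sample violation probability), and $s^*(\psi)$ is the cardinality of a minimal support subsample. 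The statement is then precisely the a posteriori risk bound for convex scenario programs with data-dependent complexity.

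The heart of the argument is a distribution-free characterization of the joint law of the complexity $s^*$ and the risk. I would first record the elementary support identity: removing the $N-s^*(\psi)$ non-support samples leaves the optimizer unchanged, so by conditioning on the identity of the support subsample and exploiting exchangeability of the i.i.d.\ draws, the non-support samples behave like fresh draws that happen to satisfy $P^\star_{\Omega_\psi}$. Garatti--Campi's refinement of the exact-feasibility theorem then provides, for each complexity level $k$, a \emph{distribution-free} upper bound on
\[
\Prob^N\{\,s^*(\psi)=k,\ \Prob(\Psi_\psi)\le 1-\epsilon\,\}
\]
expressed through Binomial tails; the quantities $\binom{N}{k}(1-\epsilon)^{N-k}$ and $\sum_{i=k}^{N-1}\binom{i}{k}(1-\epsilon)^{i-k}$ appearing in the calibration \eqref{eq-beta-N-eps} are exactly the terms produced by this characterization. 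The fact that these bounds do not depend on the underlying measure is what allows the conclusion to hold for the arbitrary $\Prob$ of the statement.

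Given the per-level control, the final step is a union/telescoping argument calibrated through \eqref{eq-beta-N-eps}. I would write
\[
\Prob^N\{\Prob(\Psi_\psi)<1-\epsilonbar(s^*(\psi))\}=\sum_{k=0}^{N}\Prob^N\{\Prob(\Psi_\psi)<1-\epsilonbar(k),\ s^*(\psi)=k\},
\]
substitute the per-level bound, and observe that the definition of $\epsilonbar(k)$ as the root of \eqref{eq-beta-N-eps} is engineered precisely so that each summand contributes at most a $\beta/N$-weighted Binomial-tail term; summing the resulting telescoping series collapses the total to $\beta$. The convention $\epsilonbar(N)=1$ disposes of the top level $k=N$, for which the event $\Prob(\Psi_\psi)<0$ is empty.

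The step I expect to be the main obstacle is establishing the joint distributional characterization of $(s^*,\Prob(\Psi_\psi))$ in a distribution-free way. This is where the genuine content of the scenario approach lies: one must handle possible \emph{degeneracy} (several minimal support subsamples, or more active constraints than the dimension of the active face), which for the non-polyhedral SDP feasible set of \eqref{eq-optim-black} cannot be dismissed by a general-position argument. The support-subsample formulation of $s^*$ --- the minimal $\varphi$ with $P^\star_{\Omega_\varphi}=P^\star_{\Omega_\psi}$ --- is precisely the device that sidesteps degeneracy, and the crux is verifying that the exchangeability and localization steps go through with this definition rather than with the classical count of support constraints. Once that characterization is secured, the calibration and summation over $k$ are routine manipulations of Binomial sums.
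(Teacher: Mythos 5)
Your proposal is correct and takes essentially the same route as the paper: the paper offers no proof of this statement but imports it verbatim as Theorem~1 of the cited Garatti--Campi reference, which is exactly the wait-and-judge/support-subsample machinery you invoke after checking that \eqref{eq-optim-black} fits the template. Your supporting observations --- linearity of each sampled constraint in $P$, uniqueness of $P^\star$ from strict convexity of $\lVert P\rVert_F^2$ (so the solution map is well defined without tie-breaking), the support-subsample definition of $s^*(\psi)$ as the device that handles degeneracy, and the convention $\epsilonbar(N)=1$ disposing of the level $k=N$ --- are all consistent with that source, so there is no gap to flag.
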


As mentioned in \cite[p.\ 609]{garatti2021therisk}, computing $s^*(\psi)$ for a sequence $\psi\in(\Re^n\times\calQ)^N$ can be difficult as it amounts to look at all possible subsequences $\varphi$ of $\psi$, whose number grows combinatorially with the length of $\psi$.
However, in the specific case considered in this paper, we can obtain an upper bound on $s^*(\psi)$ that depends only on the dimension of System \eqref{eq-switched-system} and is in general much smaller than $N$.
The reason is that problem \eqref{eq-optim-black} is a convex optimization problem.

\begin{theorem}[{\cite[Lemma~2.10]{calafiore2010random}}]\label{thm-basis-cardinality}
Let $N\in\Ne$ and $\psi\in(\Re^n\times\calQ)^N$.
Let $s^*(\psi)$ be as in Theorem \ref{thm-chance-constrained-probabilistic-bound}.
It holds that $s^*(\psi)\leq d+1$, where $d=\frac{n(n+1)}2-1$ is the dimension of the variable $P$ in problem \eqref{eq-optim-black}.
\end{theorem}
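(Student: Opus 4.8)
The plan is to exploit two structural features of problem \eqref{eq-optim-black}: its objective $\lVert P\rVert_F^2$ is \emph{strictly} convex, so the optimum $P^\star:=P^\star_{\Omega_\psi}$, whenever it exists, is unique; and each scenario constraint is \emph{linear} in the decision variable. Indeed, writing $G_i=y_iy_i^\top-\gamma^2x_ix_i^\top$, the $i$-th constraint reads $\langle G_i,P\rangle\le0$, a closed half-space $H_i$ in the space of symmetric $n\times n$ matrices, which I identify with $\Re^{n(n+1)/2}$. All constraints of \eqref{eq-optim-black} live in this space, and the target bound $d+1=\frac{n(n+1)}2$ equals its dimension. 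The main tool will be Helly's theorem: in $\Re^m$, if a finite family of convex sets has empty intersection, then some subfamily of at most $m+1$ of them already has empty intersection.

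First I would fix $\psi$ with $P^\star$ well-defined and set $v^\star=\lVert P^\star\rVert_F^2$. For $\epsilon>0$, introduce the convex set $C_\epsilon=\{P:P\succeq0,\ \trace(P)=1,\ \lVert P\rVert_F^2\le v^\star-\epsilon\}$, which gathers the fixed constraints together with a strict sublevel set of the objective, and the half-spaces $H_i=\{P:\langle G_i,P\rangle\le0\}$. By optimality of $P^\star$, no feasible point beats $v^\star$, so $C_\epsilon\cap\bigcap_{i=1}^N H_i=\emptyset$. Applying Helly's theorem in $\Re^{n(n+1)/2}$ to the family $\{C_\epsilon\}\cup\{H_i\}_{i=1}^N$ yields a subfamily of at most $\frac{n(n+1)}2+1$ sets with empty intersection. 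The key observation is that $C_\epsilon$ must belong to this subfamily: since $\langle G_i,0\rangle=0$, one has $0\in\bigcap_i H_i$, so any subfamily consisting of half-spaces alone has nonempty intersection. Hence the critical subfamily consists of $C_\epsilon$ together with at most $\frac{n(n+1)}2=d+1$ half-spaces $\{H_i:i\in\varphi_\epsilon\}$, i.e.\ the problem restricted to the scenarios in $\varphi_\epsilon$ admits no feasible point with objective value $\le v^\star-\epsilon$.

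To conclude, I would let $\epsilon\to0$. Because there are only finitely many index sets $\varphi\subseteq\{1,\ldots,N\}$ of cardinality at most $d+1$, some fixed $\varphi$ arises as $\varphi_\epsilon$ along a sequence $\epsilon\downarrow0$; for this $\varphi$ the optimal value of \eqref{eq-optim-black} restricted to $\varphi$ is at least $v^\star-\epsilon$ for every $\epsilon$ in the sequence, hence at least $v^\star$. Since dropping constraints can only lower the optimal value and $P^\star$ is feasible for the restricted program, that value equals $v^\star$; strict convexity of the objective then forces the unique restricted minimizer to be $P^\star$ itself, so $P^\star_{\Omega_\varphi}=P^\star$ and $s^*(\psi)\le|\varphi|\le d+1$.

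The steps needing the most care are the two combinatorial/limiting points: verifying that $C_\epsilon$ is forced into the empty-intersecting subfamily (this is what pins the count to $d+1$ rather than $d+2$, and it rests precisely on $0\in\bigcap_i H_i$), and passing to the limit $\epsilon\to0$ with a single index set $\varphi$ via finiteness of the candidate subsequences. An alternative route I would keep in reserve is a KKT/Carathéodory argument: writing the stationarity condition $2P^\star+\sum_i\mu_iG_i+\nu I-Z=0$ with $\mu_i\ge0$ and $Z\succeq0$, the matrix $\sum_i\mu_iG_i$ lies in the conic hull of the $G_i$, so by Carathéodory's theorem in $\Re^{n(n+1)/2}$ it is a nonnegative combination of at most $\frac{n(n+1)}2$ of them; the retained (necessarily active) scenarios reproduce $P^\star$ through the sufficiency of KKT for convex programs, giving the same bound $d+1$.
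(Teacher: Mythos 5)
Your proof is correct, but there is nothing in the paper itself to compare it against: the paper does not prove this statement, it imports it by citation from Calafiore's random convex programs paper \cite{calafiore2010random}, and its appendix merely combines the conclusion with the risk bound of Theorem~\ref{thm-chance-constrained-probabilistic-bound} to obtain Theorem~\ref{thm-chance-constrained-probabilistic-bound-optim-black}. What you have written is, in essence, a self-contained reconstruction of the classical proof of that cited lemma, which likewise rests on Helly's theorem applied to a strict sublevel set of the objective together with the constraint sets, followed by a limiting argument; so your route is the "right" one, just supplied where the paper chose to cite. Two features of your version deserve comment. First, your observation that the scenario constraints $\langle G_i,P\rangle\le0$ are homogeneous, so that $0\in\bigcap_i H_i$, is the correct problem-specific substitute for the feasibility hypotheses used in the general theory: it is exactly what forces $C_\epsilon$ into the empty-intersecting subfamily and pins the count at $\frac{n(n+1)}{2}=d+1$; and your limiting step (pigeonhole over the finitely many index sets as $\epsilon\downarrow0$, then uniqueness of the minimizer by strict convexity of $\lVert\cdot\rVert_F^2$ on a compact feasible set) is airtight, including the degenerate case where $C_\epsilon$ is empty and $\varphi_\epsilon$ may be taken empty. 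Second, your argument can be sharpened for free: whenever $P^\star_{\Omega_\psi}$ exists, problem \eqref{eq-optim-black} is feasible, so one can run the identical Helly argument inside the affine hyperplane $\{P:\trace(P)=1\}$, which has dimension $d=\frac{n(n+1)}{2}-1$, using a feasible point instead of $0$ to certify that subfamilies of scenario half-spaces intersect; this yields $s^*(\psi)\le d$, the sharper bound known for feasible scenario programs, which is stronger than the stated $d+1$ (though the paper only needs $d+1$). Finally, a caution on your reserve KKT/Carath\'eodory route: KKT necessity requires a constraint qualification, and Slater's condition can fail for the spectrahedron $\{P\succeq0,\ \trace(P)=1\}$ intersected with the half-spaces, so the Helly argument you made primary is the one that stands without additional hypotheses.
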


The proof of Theorem \ref{thm-chance-constrained-probabilistic-bound-optim-black} follows from Theorems \ref{thm-chance-constrained-probabilistic-bound} and \ref{thm-basis-cardinality}.

\subsection{Proof of Theorem~\ref{thm-probability-homogeneity}}\label{ssec-app-proof-thm-probability-homogeneity}


To prove Theorem~\ref{thm-probability-homogeneity}, we will need the following definition and lemma.

For a set $S\subseteq\SSb^{n-1}$, $x\in\SSb^{n-1}$ and $s\in(0,1)$, we define the \emph{$s$-slice of $S$ along $x$} as
\[
\Pi_{x,s}(S) = \{v\in\{x\}^\perp:sx+\sqrt{1-s^2}v\in S\}.
\]
It is readily seen that for any $v\in\Pi_{x,s}(S)$, $\lVert v\rVert=1$, so that $\Pi_{x,s}(S)\subseteq\{v\}^\perp\cap\SSb^{n-1}\cong\SSb^{n-2}$.
The following lemma states that if the measure of $S$ is close to $1$, then for most $s\in(-1,1)$, the measure of $\Pi_{x,s}(S)$ must be close to $1$ as well.

\begin{lemma}\label{lem-measure-s-slice-sphere}
Let $S\subseteq\SSb^{n-1}$, $x\in\SSb^{n-1}$ and $\eta\in(0,1)$.
Assume that $\mu^{n-1}(S)\geq1-\eta$.
Let $\theta\in(\eta,1)$, and let
\[
\sbar=\sqrt{1-\Itilde_{\frac{n-1}2,\frac12}(\eta/\theta)},
\]
where $\Itilde_{a,b}$ is as in Theorem~\ref{thm-probability-homogeneity}.
Then, there is $s\in(-1,-\sbar]\cup[\sbar,1)$ such that $\mu^{n-2}(\Pi_{x,s}(S))\geq1-\theta$.
\end{lemma}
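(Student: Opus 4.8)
The plan is to use the disintegration (slicing) of the uniform spherical measure along the axis $x$. First I would record the parametrization of $\SSb^{n-1}$ by its axial coordinate: writing $y = sx + \sqrt{1-s^2}\,v$ with $s=\langle y,x\rangle\in[-1,1]$ and $v\in\{x\}^\perp\cap\SSb^{n-1}\cong\SSb^{n-2}$, a direct computation of the induced surface metric shows that the surface measure factorizes as $(1-s^2)^{(n-3)/2}\,\diff s\,\diff v$. Consequently the axial coordinate $s$ has probability density proportional to $(1-s^2)^{(n-3)/2}$ on $[-1,1]$, and conditionally on $s$ the component $v$ is uniform on $\SSb^{n-2}$. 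Denoting by $\nu$ the law of $s$, this yields, for every measurable $T\subseteq\SSb^{n-1}$, the key identity $\mu^{n-1}(T)=\int_{-1}^{1}\mu^{n-2}(\Pi_{x,s}(T))\,\diff\nu(s)$, since $\mu^{n-2}(\Pi_{x,s}(T))$ is exactly the conditional probability of landing in $T$ given axial coordinate $s$.

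Next I would apply this identity to the complement $T=\SSb^{n-1}\setminus S$. Because the slices of $S$ and of its complement partition $\{x\}^\perp\cap\SSb^{n-1}$, we have $\mu^{n-2}(\Pi_{x,s}(T))=1-\mu^{n-2}(\Pi_{x,s}(S))$, so the hypothesis $\mu^{n-1}(S)\geq1-\eta$ becomes $\int_{-1}^{1}\bigl[1-\mu^{n-2}(\Pi_{x,s}(S))\bigr]\,\diff\nu(s)\leq\eta$. I would then argue by contradiction: suppose $\mu^{n-2}(\Pi_{x,s}(S))<1-\theta$ for every $s$ with $|s|\geq\sbar$. Restricting the integral to the set $\{s:|s|\geq\sbar\}$ and using $1-\mu^{n-2}(\Pi_{x,s}(S))>\theta$ there gives the strict bound $\eta>\theta\cdot\nu(\{s:|s|\geq\sbar\})$.

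The final step is to evaluate $\nu(\{|s|\geq\sbar\})$ and show it equals exactly $\eta/\theta$, contradicting the previous display. By symmetry of the density, $\nu(\{|s|\geq\sbar\})=2\int_{\sbar}^{1}(1-s^2)^{(n-3)/2}\,\diff s\,\big/\int_{-1}^{1}(1-s^2)^{(n-3)/2}\,\diff s$. The substitution $t=1-s^2$ converts both integrals into incomplete beta integrals: writing $B(a,b)=\int_0^1 t^{a-1}(1-t)^{b-1}\,\diff t$ and $B(w;a,b)=\int_0^w t^{a-1}(1-t)^{b-1}\,\diff t$, one finds $\nu(\{|s|\geq\sbar\})=B(1-\sbar^2;\tfrac{n-1}2,\tfrac12)/B(\tfrac{n-1}2,\tfrac12)$, i.e.\ the regularized incomplete beta function at $1-\sbar^2$. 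By the definition $\sbar=\sqrt{1-\Itilde_{\frac{n-1}2,\frac12}(\eta/\theta)}$ we have $1-\sbar^2=\Itilde_{\frac{n-1}2,\frac12}(\eta/\theta)$, and the footnote characterization of $\Itilde$ shows this regularized value is precisely $\eta/\theta$ (here $\theta>\eta$ keeps $\eta/\theta\in(0,1)$, so the inverse is well defined and $\sbar\in(0,1)$). Thus $\eta>\theta\cdot(\eta/\theta)=\eta$, which is absurd, so some $s$ with $|s|\geq\sbar$ must satisfy $\mu^{n-2}(\Pi_{x,s}(S))\geq1-\theta$, as claimed.

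I expect the main obstacle to be the rigorous justification of the disintegration identity together with the explicit axial density $(1-s^2)^{(n-3)/2}$, since this is the only place where the geometry of the sphere genuinely enters; the averaging/contradiction argument and the beta-function change of variables are then routine. A secondary point requiring care is keeping the normalizations of $\mu^{n-1}$ and $\mu^{n-2}$ consistent through the identification $\{x\}^\perp\cap\SSb^{n-1}\cong\SSb^{n-2}$, so that $\mu^{n-2}(\Pi_{x,s}(\cdot))$ really represents the conditional probability used in the integral identity.
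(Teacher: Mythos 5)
Your proposal is correct and follows essentially the same route as the paper: the disintegration identity you derive is exactly the paper's Theorem~\ref{thm-spherical-measure-product} (which the paper proves separately via a Lebesgue-measure argument rather than the surface-metric computation you sketch), and your contradiction argument restricted to $\{s:\lvert s\rvert\geq\sbar\}$ together with the change of variables $t=1-s^2$ reducing $\nu(\{\lvert s\rvert\geq\sbar\})$ to the regularized incomplete beta value $\eta/\theta$ is precisely the paper's proof of the lemma. The only point you flag as an obstacle --- rigorously justifying the slicing identity with consistent normalizations --- is handled in the paper as a standalone result, so your plan is complete.
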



\begin{proof}
By Theorem \ref{thm-spherical-measure-product} in Appendix~\ref{ssec-app-spherical-measure}, it holds that if for all $s\in(-1,-\sbar]\cup[\sbar,1)$, $\mu^{n-2}(\SSb^{n-2}\setminus(\Pi_{x,s}(S)))>\theta$, then $\mu^{n-1}(\SSb^{n-1}\setminus S)>\frac{2\int_{\sbar}^1 (1-s^2)^{(n-3)/2}\,\diff s}{\int_0^1 t^{a-1}(1-t)^{b-1}\,\diff t}\theta$.
Using the change of variable $t=1-s^2$, we get that $\mu^{n-1}(\SSb^{n-1}\setminus S)>\frac{\int_0^{1-\sbar^2} t^{a-1}(1-t)^{b-1}\,\diff t}{\int_0^1 t^{a-1}(1-t)^{b-1}\,\diff t}\theta$, a contradiction with $\mu^{n-1}(S)\geq1-\eta$ and the definition of $\sbar$.
\end{proof}

We are now able to complete the proof of Theorem~\ref{thm-probability-homogeneity}.
Therefore, fix $q\in\calQ$.
First, note that $\mu^{n-1}(\SSb^{n-1}\setminus\calX_q)\leq Q\epsilonhat$, because if $\mu^{n-1}(\SSb^{n-1}\setminus\calX_q)>Q\epsilonhat$, then by definition of $\Prob_\circ$, $\Prob_\circ((\Re^n\times\calQ)\setminus\Psi_\Omega)>\epsilonhat$, which is a contradiction.
Decompose $U=[U^{(1)},U^{(2)}]$ where $U^{(1)}\in\Re^{n\times r}$ and $U^{(2)}\in\Re^{n\times(n-r)}$, and define $V=U\Sigmatilde^{-1}$ and $\Sigmahat=\diag(0_1,\ldots,0_r,1_1,\ldots,1_{n-r})$.
Let $\Ahat_q=V^{-1}A_qV$ and
\[
\calXhat_q=\{\xhat\in\Re^n:(\Ahat_q\xhat)^\top\Sigmahat(\Ahat_q\xhat)\leq\gamma^2 \xhat^\top\Sigmahat\xhat\}.
\]
It holds that $\calXhat_q=V^{-1}\calX_q$.
Thus, by Lemma \ref{lem-spherical-measure-linear-transform} in Appendix~\ref{ssec-app-spherical-measure}, $\mu^{n-1}(\SSb^{n-1}\setminus\calXhat_q)\leq\eta$, so that $\mu^{n-1}(\calXhat_q\cap\SSb^{n-1})\geq1-\eta$.
Fix $\xhat\in\SSb^{n-1}$.
Let $s\in(-1,\sbar]\cup[\sbar,1)$ be such that $\mu^{n-2}(\Pi_{\xhat,s}(\calXhat_q\cap\SSb^{n-1}))\geq\frac12$ (Lemma \ref{lem-measure-s-slice-sphere}).
Denote $\calW=\Pi_{\xhat,s}(\calXhat_q\cap\SSb^{n-1})$.
It holds that $0\in\conv(\calW)$ and for each $w\in\calW$, $s\xhat+\sqrt{1-s^2}w_j\in\calXhat_q\cap\SSb^{n-1}$.

Fix $\alpha\geq0$, and let $x=\alpha sV\xhat$.
It holds that $x^\top\Ptilde x=\alpha^2s^2\geq\alpha^2\sbar^2$.
Finally, define $\calV=\alpha\sqrt{1-s^2}V\calW$.
It holds that $0\in\conv(\calV)$, and for each $v\in\calV$, $v^\top P^\star_\Omega v\leq\alpha^2(1-s^2)\leq\alpha^2(1-\sbar^2)$.
Hence, for each $v\in\calV$, $v^\top P^\star_\Omega v\leq\epsilon^2x^\top\Ptilde x$ and $x+v\in\calX_q$.
Since $\xhat\in\SSb^{n-1}$ and $\alpha\geq0$ were arbitrary, it follows that $\calX_q$ is $(\epsilon,P^\star_\Omega,\Ptilde)$-homogeneous, concluding the proof of the theorem.

\subsection{Spherical measure on $\SSb^{d-1}$}\label{ssec-app-spherical-measure}

For any $d\in\Ne_{>0}$, let $\SSb^{d-1}$ be the unit Euclidean sphere in $\Re^d$, i.e., $\SSb^{d-1}$ is the boundary of the unit Euclidean ball $\BBb^d$ in $\Re^d$, and let $\mubar^d$ be the Lebesgue measure on $\Re^d$.
For a Lebesgue measurable set $S\subseteq\SSb^{d-1}$, its \emph{uniform spherical measure} is defined by $\mu^{d-1}(S)=\frac1{V_d}\mubar^d(\{tx : x\in S,\,t\in[0,1]\})$, where $V_d=\mubar^d(\BBb^d)$.

\begin{theorem}\label{thm-spherical-measure-product}
Let $S\subseteq\SSb^{n-1}$, and for each $s\in(-1,1)$, let $\Pi_s(S)=\{x'\in\SSb^{n-2}:[s,\sqrt{1-s^2}x'^\top]^\top\in S\}$.
It holds that
\[
\mu^{n-1}(S)=\frac1{B(\frac{n-1}2,\frac12)}\int_{-1}^1 (1-s^2)^{(n-3)/2}\mu^{n-2}(\Pi_s(S))\,\diff s,
\]
where $B(a,b)=\int_0^1 t^{a-1}(1-t)^{b-1}\,\diff t$ is the \emph{beta} function.
\end{theorem}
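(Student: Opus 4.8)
The plan is to work directly from the cone-volume definition of $\mu^{n-1}$ and to introduce coordinates adapted to the splitting $\Re^n=\Re\times\Re^{n-1}$, writing a point as $z=(z_1,w)$ with $z_1\in\Re$ and $w\in\Re^{n-1}$. Recall that $\mu^{n-1}(S)=\frac1{V_n}\mubar^n(C(S))$, where $C(S)=\{ty:y\in S,\,t\in[0,1]\}$ is the cone over $S$. First I would pass to polar coordinates in the $w$-factor alone, writing $w=\rho u$ with $\rho=\lVert w\rVert\geq0$ and $u\in\SSb^{n-2}$, so that $\diff\mubar^n(z)=\rho^{n-2}\,\diff z_1\,\diff\rho\,\diff S(u)$, where $\diff S$ denotes the unnormalized surface measure on $\SSb^{n-2}$ with total mass $\omega_{n-2}=(n-1)V_{n-1}$, and $\mu^{n-2}=\omega_{n-2}^{-1}\,\diff S$.

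The next step is to translate membership in the cone into membership in a slice. For a point $(z_1,\rho u)$, set $t=\lVert z\rVert=\sqrt{z_1^2+\rho^2}$ and $s=z_1/t$; a short computation gives $\rho/t=\sqrt{1-s^2}$, hence $z/\lVert z\rVert=(s,\sqrt{1-s^2}\,u)$. Therefore $z\in C(S)$ if and only if $t\leq1$ and $u\in\Pi_s(S)$. Integrating out $u$ over $\Pi_s(S)$ replaces the inner surface integral by $\omega_{n-2}\,\mu^{n-2}(\Pi_s(S))$, leaving an integral over the half-plane $\{(z_1,\rho):\rho\geq0,\ z_1^2+\rho^2\leq1\}$ with integrand $\rho^{n-2}\,\omega_{n-2}\,\mu^{n-2}(\Pi_s(S))$.

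I would then perform the change of variables $(z_1,\rho)\mapsto(t,s)$ given by $z_1=ts$ and $\rho=t\sqrt{1-s^2}$, with $t\in[0,1]$ and $s\in(-1,1)$ (the constraint $\rho\geq0$ being automatic). The Jacobian of this map is $t/\sqrt{1-s^2}$ in absolute value, and together with $\rho^{n-2}=t^{n-2}(1-s^2)^{(n-2)/2}$ this produces the weight $t^{n-1}(1-s^2)^{(n-3)/2}$. The radial part separates, and $\int_0^1 t^{n-1}\,\diff t=\tfrac1n$ factors out, yielding $\mu^{n-1}(S)=\frac{\omega_{n-2}}{nV_n}\int_{-1}^1(1-s^2)^{(n-3)/2}\,\mu^{n-2}(\Pi_s(S))\,\diff s$. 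It remains to identify the constant: using $\omega_{n-2}=(n-1)V_{n-1}$ and $V_d=\pi^{d/2}/\Gamma(\tfrac d2+1)$, one checks that $\frac{\omega_{n-2}}{nV_n}=\frac{(n-1)V_{n-1}}{nV_n}=\frac{\Gamma(n/2)}{\sqrt\pi\,\Gamma((n-1)/2)}$, which is exactly $1/B(\tfrac{n-1}2,\tfrac12)$ by the identity $B(a,b)=\Gamma(a)\Gamma(b)/\Gamma(a+b)$ with $\Gamma(\tfrac12)=\sqrt\pi$.

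I expect the main difficulty to be purely bookkeeping rather than conceptual: keeping the normalization constants straight (in particular the passage between the unnormalized surface measure $\diff S$ and the normalized $\mu^{n-2}$, which is where the factor $\omega_{n-2}$ enters), correctly evaluating the Jacobian, and then verifying that the resulting prefactor collapses to $1/B(\tfrac{n-1}2,\tfrac12)$ via the Gamma-function identities. A minor technical point is that the exceptional sets $\{w=0\}$ and $\{s=\pm1\}$ have $\mubar^n$-measure zero in $C(S)$ and do not affect any of the integrals, so the parametrization is valid almost everywhere, which is all that is needed.
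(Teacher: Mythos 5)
Your proof is correct, but it follows a genuinely different route from the paper's. The paper does not integrate in polar coordinates directly: it invokes an external lemma (from Willem's book) expressing $\mu^{d-1}$ of a set as a derivative of Lebesgue volumes, $\frac{\diff}{\diff\alpha}\mubar^d(\calX\cap\alpha\BBb^d)$, applies Fubini to the infinite cone $S^+$ sliced by hyperplanes $\{z_1=s\}$, and then differentiates under the integral sign to recover the slice measures $\mu^{n-2}(\Pi_s(S))$; finally, it identifies the constant \emph{without any Gamma-function computation}, by plugging in $S=\SSb^{n-1}$ and using the normalization $\mu^{n-1}(\SSb^{n-1})=1$ to force $\frac{V_{n-1}}{V_n}\frac{n}{n-1}=\int_{-1}^1(1-s^2)^{(n-3)/2}\,\diff s=B(\tfrac{n-1}2,\tfrac12)$. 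Your argument instead works on the finite cone with an explicit change of variables $(z_1,\rho)\mapsto(t,s)$, whose Jacobian $t/\sqrt{1-s^2}$ you compute correctly, and then evaluates the prefactor $\frac{(n-1)V_{n-1}}{nV_n}$ by Gamma identities. What each buys: your version is more self-contained (no external differentiation lemma, and no interchange of derivative and integral, which the paper uses without justification), at the price of two pieces of bookkeeping --- the explicit constant evaluation, and the identification of the paper's cone-volume measure $\mu^{n-2}$ with the normalized surface measure on $\SSb^{n-2}$, which you use implicitly when replacing the inner surface integral by $\omega_{n-2}\,\mu^{n-2}(\Pi_s(S))$ (this is standard and follows from polar coordinates in $\Re^{n-1}$, but it deserves a sentence since the paper defines $\mu^{n-2}$ only via cone volumes). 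The paper's normalization trick for the constant is worth remembering, as it sidesteps exactly the Gamma bookkeeping you flag as the main difficulty; incidentally, your computation also confirms the exponent $(1-s^2)^{(n-3)/2}$, whereas the paper's displayed intermediate step contains a sign typo in that exponent ($\frac{(1-s^2)^{(n-2)/2}}{(1-s^2)^{-1/2}}$ should be $(1-s^2)^{(n-2)/2}(1-s^2)^{-1/2}$).
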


\begin{proof}
We will use the following result from \cite[Lemma 2.4.7]{willem2013functional}: for any $\alpha_0>0$ and $\calX\subseteq\Re^d$,
\[
\frac{\diff}{\diff\alpha}\mubar^d(\calX\cap\alpha\BBb^d)\Big\vert_{\alpha=\alpha_0} = \frac{d}{V_d}\alpha_0^{d-1}\mu^{d-1}\Big(\frac1{\alpha_0}\calX\cap\SSb^{d-1}\Big).
\]
Thus, letting $S^+=\{tx : x\in S,\,t\geq0\}$, we get that
\begin{equation}\label{eq-spherical-measure-derivative}
\mu^{n-1}(S) = \frac{V_n}{n}\frac{\diff}{\diff\alpha}\mubar^n(S^+\cap\alpha\BBb^n)\Big\vert_{\alpha=1}.
\end{equation}
Now, for each $s\in(-1,1)$, let $[S^+]_s=\{x'\in\Re^{n-1}:[s,x']\in S^+\}$.
By the product measure of $\mubar^n$, it holds that
\[
\mubar^n(S^+\cap\alpha\BBb^n) = \int_{-\alpha}^\alpha\mubar^{n-1}\big([S^+]_s\cap\sqrt{\alpha^2-s^2}\BBb^{n-1}\big)\,\diff s
\]
Hence, injecting in \eqref{eq-spherical-measure-derivative}, we get that
\begin{align*}
\mu^{n-1}(S) &= \frac{V_n}{n} \int_{-1}^1 \frac{\diff}{\diff\alpha}\mubar^{n-1}\big([S^+]_s\cap\sqrt{\alpha^2-s^2}\BBb^{n-1}\big)\Big\vert_{\alpha=1}\,\diff s \\
&= \frac{V_n}{n} \int_{-1}^1 \frac{n-1}{V_{n-1}}\frac{(1-s^2)^{(n-2)/2}}{(1-s^2)^{-1/2}}\mu^{n-2}(\Pi_s(S)) \,\diff s.
\end{align*}
The proof is completed by noting that, due to the normalization property $\mu^{n-1}(\SSb^{n-1})=1$, $\frac{V_{n-1}}{V_n}\frac{n}{n-1}$ is equal to $\int_{-1}^1 (1-s^2)^{(n-3)/2}\,\diff s$, which is shown to be equal to $B(\frac{n-1}2,\frac12)$ by using the change of variable $t=1-s^2$.
\end{proof}



\begin{lemma}[{\cite[Theorem 15]{kenanian2019data}}]\label{lem-spherical-measure-linear-transform}
Let $S\subseteq\SSb^{n-1}$, and let $A\in\Re^\nxn$ be invertible.
Let $\sigmabar_1\geq\ldots\geq\sigmabar_n>0$ be the singular values of $A$.
It holds that $\mu^{n-1}(\{Ax/\lVert Ax\rVert:x\in S\})\leq\frac{\prod_{j=1}^n\sigmabar_j^{1/n}}{\sigmabar_n}\mu^{n-1}(S)$.
\end{lemma}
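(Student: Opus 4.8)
The plan is to reduce the claim to a volume comparison between solid cones and then to use a single linear change of variables. Write $\Phi(x)=Ax/\lVert Ax\rVert$ for the map in the statement, and for $T\subseteq\SSb^{n-1}$ let $\calK(T)=\{tx:x\in T,\,t\in[0,1]\}$ be the solid cone over $T$ and $\calC(T)=\{tx:x\in T,\,t\ge0\}$ the full cone. By the definition of the spherical measure, $\mu^{n-1}(T)=V_n^{-1}\mubar^n(\calK(T))$. First I would note that, since $A$ is linear, $\calC(\Phi(S))=A\,\calC(S)$, and intersecting with $\BBb^n$ gives $\calK(\Phi(S))=A\big(\calC(S)\cap A^{-1}\BBb^n\big)$, where $A^{-1}\BBb^n=\{y:\lVert Ay\rVert\le1\}$ is an ellipsoid with semi-axes $\sigmabar_1^{-1},\dots,\sigmabar_n^{-1}$.

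Next I would apply the change of variables $y=Ax$, of Jacobian $\lvert\det A\rvert=\prod_{j=1}^n\sigmabar_j$, to obtain $\mu^{n-1}(\Phi(S))=V_n^{-1}\big(\prod_{j=1}^n\sigmabar_j\big)\,\mubar^n\big(\calC(S)\cap A^{-1}\BBb^n\big)$. The set $\calC(S)\cap A^{-1}\BBb^n$ is star-shaped about the origin, with radial function $\rho(u)=1/\lVert Au\rVert$ in each direction $u\in S$ (indeed $su\in A^{-1}\BBb^n$ iff $s\lVert Au\rVert\le1$). Integrating in polar coordinates and using $\mu^{n-1}=\omega/(nV_n)$, where $\omega$ is the surface measure on $\SSb^{n-1}$, the $V_n$ and the factor $n$ cancel and I arrive at the exact identity
\[
\mu^{n-1}(\Phi(S))=\Big(\prod_{j=1}^n\sigmabar_j\Big)\int_S\lVert Au\rVert^{-n}\,\diff\mu^{n-1}(u);
\]
equivalently, $\Phi$ is a diffeomorphism of $\SSb^{n-1}$ whose Jacobian with respect to the normalized surface measure is $\prod_{j=1}^n\sigmabar_j\,\lVert Au\rVert^{-n}$. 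The bound then follows from a pointwise estimate of the integrand: since $\lVert Au\rVert\ge\sigmabar_n\lVert u\rVert=\sigmabar_n$ on $\SSb^{n-1}$, I get $\lVert Au\rVert^{-n}\le\sigmabar_n^{-n}$, and pulling this constant out yields $\mu^{n-1}(\Phi(S))\le\big(\prod_{j=1}^n\sigmabar_j/\sigmabar_n^n\big)\,\mu^{n-1}(S)$.

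The step I expect to be the real obstacle is pinning down the exact multiplicative constant. The estimate $\lVert Au\rVert\ge\sigmabar_n$ is tight precisely along a right singular vector $v_n$ associated with $\sigmabar_n$: there the Jacobian factor equals $\prod_{j=1}^n\sigmabar_j/\sigmabar_n^n$, and a thin spherical cap around $v_n$ is expanded by essentially this factor, so no direction-wise argument can do better than the constant $\prod_{j=1}^n\sigmabar_j/\sigmabar_n^n=\prod_{j=1}^{n-1}(\sigmabar_j/\sigmabar_n)$, and this value is actually attained. Obtaining the sharper $\frac{\prod_{j=1}^n\sigmabar_j^{1/n}}{\sigmabar_n}$ stated above --- which is strictly smaller whenever the $\sigmabar_j$ are not all equal --- cannot come from bounding the Jacobian direction by direction; I would therefore check carefully against \cite{kenanian2019data} whether the sharper constant relies on additional hypotheses on $S$ (or on a different normalization of $\mu^{n-1}$), and, for the use made in Theorem~\ref{thm-probability-homogeneity}, track how the resulting factor matches the skewness $\chi(P^\star_\Omega)$.
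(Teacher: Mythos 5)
The paper does not actually prove this lemma: it is imported verbatim by citation from \cite[Theorem 15]{kenanian2019data}, so there is no internal proof to compare yours against. Taken on its own terms, your argument is correct: the cone construction, the change of variables $y=Ax$, and the polar-coordinate integration are all valid and give the exact identity $\mu^{n-1}(\{Ax/\lVert Ax\rVert : x\in S\})=\bigl(\prod_{j=1}^n\sigmabar_j\bigr)\int_S\lVert Au\rVert^{-n}\,\diff\mu^{n-1}(u)$, from which the pointwise bound $\lVert Au\rVert\geq\sigmabar_n$ yields the constant $\prod_{j=1}^n\sigmabar_j/\sigmabar_n^n=\prod_{j=1}^{n-1}(\sigmabar_j/\sigmabar_n)$.

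The substantive point is your closing paragraph, and you are right to press it: the statement as printed is false, not merely out of reach of your method. The printed constant $\bigl(\prod_j\sigmabar_j\bigr)^{1/n}/\sigmabar_n$ is the $n$-th root of the sharp constant $\prod_j\sigmabar_j/\sigmabar_n^n$ that you obtained, hence strictly smaller whenever the $\sigmabar_j$ are not all equal, and your thin-cap argument then produces a counterexample. Concretely, for $n=2$, $A=\diag(2,1)$ and $S$ the arc of angular half-width $\delta$ centered at $e_2$, one computes $\mu^{1}(S)=\delta/\pi$ and $\mu^{1}(\{Ax/\lVert Ax\rVert:x\in S\})=\arctan(2\tan\delta)/\pi$, whose ratio tends to $2>\sqrt2$ as $\delta\to0$; the normalization of the measure is irrelevant since it cancels in the ratio. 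So the correct statement is the one you proved, and the version printed here must be a misquotation of (or an error inherited from) the cited theorem, with an $n$-th power misplaced. This propagates downstream: in the proof of Theorem~\ref{thm-probability-homogeneity} the lemma is applied to $A=V^{-1}=\Sigmatilde U^\top$, whose nontrivial singular values are the $\lambda_j$, and the factor actually used there, the skewness $\chi(P^\star_\Omega)=\prod_j\lambda_j^{2/n}/\lambda_{n-r}^2$, equals the \emph{square} of the printed constant, whereas the correct factor is its $n$-th power $\prod_j\lambda_j/\lambda_{n-r}^n$, which is strictly larger for $n>2$ unless all $\lambda_j$ are equal. Hence $\eta$ in Theorems~\ref{thm-probability-homogeneity} and~\ref{thm-main-result} needs to be redefined with this larger factor. (In the paper's two applications $P$ is fixed to a projection, so all nonzero eigenvalues equal $1$, both factors equal $1$, and the numerical conclusions there are unaffected.)
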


\fi

\bibliographystyle{IEEEtran} 
\bibliography{IEEEabrv,myrefs}


\end{document}